\newtheorem{thm}{Theorem}[section]
\newtheorem{Def}{Definition}[section]
\newtheorem{Rem}{Remark}[section]
\newtheorem{corl}{Corollary}[section]
\newtheorem{lem}{Lemma}[section]
\date{}
\def\theequation{\@arabic{\c@section}.\@arabic{\c@equation}}
\newcommand{\Om} {\Omega}
\newcommand{\be} {\begin{equation}}
\newcommand{\ee} {\end{equation}}
\newcommand{\bea} {\begin{eqnarray}}
\newcommand{\eea} {\end{eqnarray}}
\newcommand{\Bea} {\begin{eqnarray*}}
\newcommand{\Eea} {\end{eqnarray*}}
\newcommand{\De} {\Delta}
\newcommand{\D}{(-\Delta)^m}
\numberwithin{equation}{section}
\newtheorem{theorem}{Theorem}[section]
\newtheorem{proposition}[theorem]{Proposition}
\begin{document}
\title[Multiplicity of a Critical Growth Inhomogeneous Equation]{Existence and Multiplicity of a Nonhomogeneous Polyharmonic Equation With Critical Exponential Growth in Even Dimension}
\author{Abhishek Sarkar}
\address{TIFR Centre For Applicable Mathematics\\
Post Bag No. 6503, Shardanagar, Bangalore-560065, Karnataka, India.}
\email{abhishek@math.tifrbng.res.in}
\subjclass[2010]{35J30, 35J40, 35J60.}
\keywords{Polyharmonic; Critical exponent; Multiple solutions.}
\begin{abstract}
In this paper we study the existence of at least two positive weak solutions for an inhomogeneous fourth order equation with Navier boundary data involving nonlinearities of critical growth with a bifurcation parameter $\lambda$ in $\mathbb{R}^{2m}$. We establish here the lower and upper bound for $\lambda$ which determine multiplicity and non-existence respectively. 
\end{abstract}

\maketitle

\section{Introduction}
Let $\Omega \subset \mathbb{R}^{2m}$ be a bounded domain. In this context we study the existence of multiple solutions in $W^{m,2}_{\mathcal{N}}(\Om) = \{u \in W^{m,2}(\Omega): \Delta^j u = 0 \mbox{ on } \partial \Omega \text{ for } 0 \leq j < \frac{m}{2}\}$ of the following $2m$-th order problem with Navier boundary condition
$$(P)\hspace{1cm}\left\{
\begin{array}{lllll}\left.
\begin{array}{lllll}
\D u &= \mu u |u|^pe^{u^2} + \lambda h(x)\\
\hspace{10mm} u &> 0 \end{array}\right\} \;\; \text{in} \;\Om, \\
u=\De u=0=..=\De^{m-1}u \hspace{15mm}\text{ on } \partial \Omega \end{array}\right.
$$
 where $h \geq 0$ in $\Omega$, $\|h\|_{L^2(\Omega)} = 1,$ $ \lambda >0, \mu =1$ if $p>0$ and $\mu \in (0,\lambda_1(\Omega))$ if $p=0$. Also assume that $\lambda_1(\Omega)$ be the first eigenvalue of $\D $ on $W^{m,2}_{\mathcal{N}}(\Omega)$ with Navier boundary condition and which is strictly positive. The existence of multiple solutions for analogous problems in higher dimension with critical exponent have been studied in \cite{Lu}, \cite{Peral} for the Dirichlet boundary condition and in \cite{Zhang} for Navier boundary condition. The corresponding problem for second order elliptic equations have been studied in \cite{Prashanth} for dimension two, and in \cite{Tarantello} for higher dimensions. The critical growth for the nonlinearity is $u \mapsto |u|^pu$, $p = \frac{4m}{n-2m}$, when $n \geq 2m+1$ 
 from the Sobolev embedding in $\mathbb{R}^n$. In \cite{Moser} Moser proved the following,  
 \begin{thm}          \label{Moser_Thm}
 Let $\Om \subset \mathbb{R}^n, n \geq 2$ be a bounded domain. There exists a constant $C_n >0$ such that for any $u \in W^{1,n}_0(\Omega)$, $n\geq 2$ with $\|\nabla u\|_{L^n(\Omega)} \leq 1$, then 
 \begin{equation}
  \int_{\Omega} e^{\alpha |u|^p}dx \leq C_n|\Omega|,              \label{Moser_Sharp}
 \end{equation}
where $$p = \frac{n}{n-1}, \hspace{1mm}\alpha_n:= n w^{\frac{1}{n-1}}_{n-1},$$ and $w_{n-1}$ is the surface measure of the unit sphere $\mathbb{S}^{n-1} \subset \mathbb{R}^n$. Furthermore the integral on the left hand side can be made arbitrarily large if $\alpha > \alpha_n$ by appropriate choice of $u$ with $\|\nabla u\|_{L^n(\Omega)} \leq 1$. The embedding $$W^{1,n}_0(\Omega) \ni u \mapsto e^{\alpha |u|^{\frac{n}{n-1}}} \in L^1(\Omega)$$ is compact for $\alpha < \alpha_n$ and is not compact for $\alpha = \alpha_n$.  
\end{thm}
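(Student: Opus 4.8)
The plan is to reproduce Moser's original argument, whose three ingredients are symmetrization, a reduction to a one–dimensional variational problem, and a sharp calculus lemma on the half–line; the two sharpness/compactness assertions are then read off from an explicit extremal sequence. First I would reduce to radial, radially nonincreasing profiles. Given $u \in W^{1,n}_0(\Omega)$ with $\|\nabla u\|_{L^n(\Omega)} \le 1$, let $u^\ast$ be its Schwarz symmetrization on the ball $B_R$ with $|B_R| = |\Omega|$. Since $u^\ast$ is equimeasurable with $|u|$, the quantity $\int_\Omega e^{\alpha|u|^p}\,dx = \int_{B_R} e^{\alpha |u^\ast|^p}\,dx$ is unchanged, while the P\'olya--Szeg\"o inequality gives $\|\nabla u^\ast\|_{L^n(B_R)} \le \|\nabla u\|_{L^n(\Omega)} \le 1$. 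Hence it suffices to prove the bound for radial, nonincreasing $u$ on $B_R$ vanishing on $\partial B_R$.

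Second, I would pass to the half–line. Writing the two integrals in polar coordinates and substituting $t = n\log(R/r)$ (so that $r=R$ corresponds to $t=0$ and $r\to 0$ to $t\to\infty$), a direct computation using $|B_R|=\tfrac{w_{n-1}}{n}R^n$ yields
\[
\int_{B_R} e^{\alpha|u|^p}\,dx = |\Omega|\int_0^\infty e^{\alpha\, u^p - t}\,dt, \qquad \int_{B_R}|\nabla u|^n\,dx = w_{n-1}\, n^{n-1}\int_0^\infty |\dot u|^n\,dt .
\]
Setting $\psi(t) := (w_{n-1} n^{n-1})^{1/n}\, u(r(t))$, the constraint becomes $\psi(0)=0$ and $\int_0^\infty|\dot\psi|^n\,dt \le 1$, and one checks that the normalization $\alpha=\alpha_n = n\, w_{n-1}^{1/(n-1)}$ is precisely the value for which $\alpha\, u^p = \psi^{p}$. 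The entire inequality thus collapses to the single assertion
\[
\sup\left\{ \int_0^\infty e^{\psi(t)^{p}-t}\,dt \ :\ \psi(0)=0,\ \int_0^\infty |\dot\psi|^n\,dt \le 1 \right\} =: C_n < \infty .
\]

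Third, I would establish this one–dimensional lemma, which is the \emph{crux} of the whole theorem. By H\"older's inequality $\psi(t) \le \big(\int_0^t|\dot\psi|^n\big)^{1/n}\, t^{\,1-1/n} \le t^{1/p}$, so $\psi(t)^p - t \le 0$ pointwise and the integrand is at most $1$; but this alone only controls the integral over bounded sets. The real issue is to extract decay: writing $I(t)=\int_0^t|\dot\psi|^n$ one obtains $\psi(t)^p - t \le \big(I(t)^{1/(n-1)}-1\big)t$, which decays exponentially so long as $I(\infty)<1$. The \emph{main obstacle} is the borderline case $I(\infty)=1$, where this coefficient degenerates: here one must split the half–line at the point where the gradient budget is nearly exhausted and estimate the tail by a comparison/rearrangement argument showing that the exponent eventually drops below $-(t-\mathrm{const})$. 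This is exactly the regime where both the sharp constant $\alpha_n$ and the compactness threshold live, and it is the delicate part to get right.

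Finally, for the sharpness and non-compactness statements I would test against the explicit Moser functions $u_k$ on $B_R$: normalized truncated logarithms, constant for $|x|\le R/k$, proportional to $\log(R/|x|)$ on $R/k\le|x|\le R$, and $0$ outside, scaled so that $\|\nabla u_k\|_{L^n}=1$. A direct computation shows $\int_{B_R} e^{\alpha u_k^p}\,dx \to \infty$ whenever $\alpha>\alpha_n$, proving both that $\alpha_n$ is sharp and --- since $u_k \rightharpoonup 0$ weakly in $W^{1,n}_0$ while $\int_{B_R} e^{\alpha_n u_k^p}\,dx$ fails to converge to $|\Omega|$ because of the concentration at the origin --- that the embedding is noncompact at $\alpha=\alpha_n$. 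Compactness for $\alpha<\alpha_n$ then follows from uniform integrability: choosing $\alpha<\alpha'<\alpha_n$, the strict inequality of the first part bounds $\int e^{\alpha'|u|^p}$ uniformly on norm-bounded sets, so $e^{\alpha|u|^p}$ is uniformly integrable and Vitali's theorem upgrades a.e.\ convergence along a weakly convergent subsequence to $L^1$ convergence.
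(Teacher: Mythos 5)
The paper never proves this statement: Theorem \ref{Moser_Thm} is Moser's 1971 theorem, quoted from \cite{Moser} as background for the Adams--Tarsi inequalities that the paper actually uses, so there is no in-paper proof to compare yours against. Judged on its own, your proposal is a faithful outline of Moser's original argument, and the routine parts are correct: the symmetrization/P\'olya--Szeg\"o reduction, the substitution $t=n\log(R/r)$ (both displayed identities check out, and the normalization $\psi=(w_{n-1}n^{n-1})^{1/n}u$ turns $\alpha_n u^p$ into exactly $\psi^p$ precisely because $\alpha_n=n\,w_{n-1}^{1/(n-1)}$), the Moser-sequence computation giving blow-up for $\alpha>\alpha_n$ and non-compactness at $\alpha=\alpha_n$, and the Vitali/uniform-integrability argument for compactness when $\alpha<\alpha_n$ (with the caveat that this argument requires the constraint $\|\nabla u_k\|_{L^n}\le 1$ as in the statement; on sets bounded by a constant $M>1$ the uniform bound on $\int_\Omega e^{\alpha'|u|^p}$ fails for $\alpha'$ near $\alpha_n$, by the very sharpness you prove).

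The genuine gap is exactly at what you yourself call the crux: the one-dimensional lemma $\sup\bigl\{\int_0^\infty e^{\psi(t)^p-t}\,dt:\ \psi(0)=0,\ \int_0^\infty|\dot\psi|^n\,dt\le 1\bigr\}<\infty$ is asserted, not proven. Your inequality $\psi(t)^p-t\le\bigl(I(t)^{1/(n-1)}-1\bigr)t$ disposes only of the case $I(\infty)<1$; in the borderline case $I(\infty)=1$ -- which is the whole content of the theorem, since $\psi$ may spend its gradient budget arbitrarily slowly -- you offer only the sentence that one should ``split the half-line \ldots and estimate the tail by a comparison/rearrangement argument.'' That is a pointer to a proof, not a proof: Moser's actual treatment of this case rests on a quantitative level-set estimate (the measure of $\{t:\psi(t)^p\ge t-a\}$ grows at most linearly in $a$, uniformly over admissible $\psi$), and without some such uniform estimate the finiteness of $C_n$, hence inequality (\ref{Moser_Sharp}) itself, is not established. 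Until that lemma is supplied, the proposal is an accurate roadmap of Moser's proof rather than a complete argument.
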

In \cite{Adams} Adams extended the above result of Moser to higher order Sobolev spaces. To state the result of Adams we define the following $m$-th order derivatives of $u \in C^m(\Omega)$:
\[ \nabla^m u = \left\{
            \begin{array}{lllll}
             \Delta ^{\frac{m}{2}} u    \hspace{1cm} \text{for } m \text{ even},\\
             \nabla \Delta^{\frac{m-1}{2}} u \hspace{3mm} \text{for } m \text{ odd.}
            \end{array}\right.\]
            Furthermore, $\| \nabla^m u\|_p$ is the $L^p$ norm of the function $|\nabla^mu|,$ the usual Euclidean length of the vector $\nabla^m u$. We also denote $W^{m,\frac{n}{m}}_0(\Omega)$ to be the completion of $C^{\infty}_0(\Omega)$ under the Sobolev norm
            \begin{equation}
            \|u\|_{W^{m,\frac{n}{m}}(\Omega)}= \left( \|u\|^{\frac{n}{m}}_{\frac{n}{m}} + \sum_{|\alpha|=1}^m \|D^{\alpha} u\|^{\frac{n}{m}}_{\frac{n}{m}}\right)^{\frac{m}{n}}.              \label{full-norm}
            \end{equation}
 \par Adams proved the following embedding:
 \begin{thm}\label{Adams_Thm}
  Let $\Omega \subset \mathbb{R}^n$ be a bounded domain. If $m$ is a positive integer less than $n$, then there exists a constant $C_0=C(n,m)>0$ such that for any $u \in W^{m,\frac{n}{m}}_0(\Omega)$ with $\| \nabla^m u\|_{\frac{n}{m}} \leq 1,$ then 
 \begin{equation}
  \frac{1}{|\Omega|} \int_{\Omega} \exp(\beta |u(x)|^{\frac{n}{n-m}})dx \leq C_0,              \label{Adams_Sharp}
  \end{equation}
   for all $\beta \leq \beta_{n,m}$ where 
 \[\beta_{n,m}=\left\{
\begin{array}{lllll}
\frac{n}{w_{n-1}}\left[\frac{\pi ^{\frac{n}{2}}2^m \Gamma(\frac{m+1}{2})}{\Gamma(\frac{n-m+1}{2})}\right]^{\frac{n}{n-m}} \hspace{3mm} \text{when } m \text{ is odd,}\\
\frac{n}{w_{n-1}}\left[\frac{\pi ^{\frac{n}{2}}2^m \Gamma(\frac{m}{2})}{\Gamma(\frac{n-m}{2})}\right]^{\frac{n}{n-m}} \hspace{3mm} \text{when } m \text{ is even.}
\end{array}\right.\]
Furthermore, for any $\beta > \beta_{n,m}$, the integral can be made as large as possible by appropriate choice of $u$ with $\|\nabla ^m u\|_{\frac{n}{m}} \leq 1$. 
\end{thm}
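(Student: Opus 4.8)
The plan is to follow Adams' original strategy, which reduces the $n$-dimensional sharp exponential estimate to a one-dimensional calculus lemma by means of a potential representation together with the rearrangement inequality of O'Neil. First I would establish a representation of $u$ in terms of its top-order derivative $\nabla^m u$. Since $u \in W^{m,\frac{n}{m}}_0(\Omega)$ can be approximated in norm by $C^\infty_0$ functions, it suffices to argue for smooth $u$; using the fundamental solution of $\Delta^{\frac{m}{2}}$ when $m$ is even and of $\nabla\Delta^{\frac{m-1}{2}}$ when $m$ is odd, integration by parts yields the pointwise bound
\begin{equation}
|u(x)| \le \frac{1}{\gamma(n,m)} \int_\Omega \frac{|\nabla^m u(y)|}{|x-y|^{n-m}}\, dy,  \label{rep}
\end{equation}
where $\gamma(n,m)$ is the explicit normalizing constant of the Riesz kernel of order $m$. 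The precise value of this constant is exactly what produces the sharp exponent $\beta_{n,m}$, so it must be tracked carefully rather than absorbed into a generic constant.

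Next I would set $f = |\nabla^m u|$, so that $\|f\|_{\frac{n}{m}}\le 1$, and reduce the problem to bounding the exponential integral of the Riesz potential $I_m f(x) = \int_\Omega |x-y|^{m-n} f(y)\, dy$. Here I would invoke the O'Neil rearrangement inequality for convolutions, which controls the decreasing rearrangement $(I_m f)^*$ in terms of $f^*$ and the rearrangement of the kernel $|x|^{m-n}$, the latter being $g^*(s)=c_{n,m}\, s^{\frac{m-n}{n}}$. Passing to a logarithmic variable transforms the estimate into a one-dimensional one: writing $\phi$ for a suitable multiple of $f^*$ and $p'=\frac{n}{n-m}$ for the conjugate of $p=\frac{n}{m}$, one is reduced to controlling $\int_0^\infty \exp(-F(s))\,ds$, where $F(s)= s-\bigl(\int_0^\infty a(s,t)\phi(t)\,dt\bigr)^{p'}$ and the kernel $a$ satisfies $a(s,t)\le 1$ for $s\le t$ together with the uniform $L^{p'}$ bound on its off-diagonal tail that is the hypothesis of Adams' main lemma.

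The technical heart, and the step I expect to be the main obstacle, is this one-dimensional lemma: proving that $\int_0^\infty e^{-F(s)}\,ds$ is bounded by a constant depending only on $n$ and $m$ (and not on $\phi$) under the normalization $\int_0^\infty \phi^{p}\,ds\le 1$. The subtlety is that the estimate is \emph{sharp}, so the exponent $p'$ and the leading constant are exactly critical and any crude application of Hölder's inequality loses the borderline case. I would handle it by splitting the domain of integration according to whether the potential term dominates the linear term $s$, using $a(s,t)\le 1$ on the diagonal region and the $L^{p'}$ smallness of the tail on the complement, paralleling Moser's treatment of the first-order case in Theorem \ref{Moser_Thm} but with the kernel decay now dictated by the Riesz exponent $m-n$.

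Finally, for the sharpness assertion I would exhibit an explicit extremizing sequence, the higher-order analogue of the Moser functions: suitably normalized truncations of the fundamental solution concentrating logarithmically at an interior point, for which $\|\nabla^m u\|_{\frac{n}{m}}\le 1$ while $\int_\Omega \exp\bigl(\beta |u|^{\frac{n}{n-m}}\bigr)\,dx \to \infty$ whenever $\beta>\beta_{n,m}$. Verifying that these functions lie in $W^{m,\frac{n}{m}}_0(\Omega)$ and computing their top-order norm to the required precision is routine but constant-sensitive, and it is precisely the matching between this construction and the threshold in the one-dimensional lemma that pins down the value of $\beta_{n,m}$.
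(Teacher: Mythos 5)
This theorem is not proved in the paper at all: it is stated as background and attributed to Adams \cite{Adams}, so there is no internal proof to compare your attempt against. Judged on its own merits, your outline is a faithful reconstruction of Adams' original published argument: the pointwise representation of $u$ as a Riesz potential of $\nabla^m u$ (with the kernel constant tracked exactly, and with the even/odd dichotomy in that constant being precisely what produces the two formulas for $\beta_{n,m}$), O'Neil's rearrangement inequality for convolutions, the logarithmic change of variables, the reduction to a one-dimensional exponential lemma, and a concentrating family for sharpness. So your plan identifies the correct route and the correct load-bearing steps.

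Two places where your sketch is substantially thinner than the real proof deserve flagging. First, the one-dimensional lemma cannot be closed by the diagonal/off-diagonal splitting you describe; that splitting alone loses exactly the borderline case, as you yourself suspect. Adams' proof (an adaptation of an argument of Garsia) instead estimates the measure of the sublevel sets $\{s : F(s)\le L\}$, showing it grows at most linearly in $|L|$ with constants depending only on $p$ and the kernel bound, and then integrates by the layer-cake formula; the critical exponent $p'=\frac{n}{n-m}$ enters through that measure estimate. Second, the sharpness construction is genuinely harder than in the first-order case of Theorem \ref{Moser_Thm}: a truncated logarithm or truncated fundamental solution is merely Lipschitz at the truncation radii, so its derivatives of order up to $m$ are not controlled, and one must mollify near the gluing spheres and verify that the corrections to $\|\nabla^m u\|_{\frac{n}{m}}$ are negligible in the limit. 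That this is not routine is reflected in the paper itself, which remarks that explicit extremal sequences in the conformal case $n=2m$ were only produced later, in \cite{Lu-Yang} and \cite{Zhao-Chang}. Neither point is a wrong idea in your proposal, but both are places where the phrase ``routine but constant-sensitive'' conceals the actual technical content of Adams' paper.
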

\begin{Rem}
We remark that for the case $n=2m=4$, Lu-Yang in \cite{Lu-Yang} and in general Zhao-Chang \cite{Zhao-Chang} showed the existence of an explicit sequence for $n=2m$ to prove the sharpness of the constant in $W^{m,\frac{n}{m}}_0(\Omega)$.
\end{Rem}

Let $W^{m,\frac{n}{m}}_{\mathcal{N}}(\Omega)$ denote the following subspace of $W^{m,\frac{n}{m}}(\Omega)$:
$$W^{m,\frac{n}{m}}_{\mathcal{N}}(\Omega) = \left\{u \in W^{m,\frac{n}{m}}(\Omega): \Delta^j u =0, \text{ on } \partial \Omega \text{ for } 0 \leq j\leq [ (m-1)/2]\right\}.$$ Note that $W_0^{m,\frac{n}{m}}(\Omega)$ is strictly contained in $W^{m,\frac{n}{m}}_{\mathcal{N}}(\Omega)$.
Therefore,
 \[ \sup_{u \in W^{m,\frac{n}{m}}_0(\Omega), \|\nabla^m u\|_{\frac{n}{m}} \leq 1} \int_{\Omega} exp(\beta_{n,m} |u|^{\frac{n}{n-m}})dx \\ 
 \leq\\ \sup_{u \in W^{m,\frac{n}{m}}_{\mathcal{N}}(\Omega), \|\nabla^m u\|_{\frac{n}{m}} \leq 1} \int_{\Omega} exp(\beta_{n,m} |u|^{\frac{n}{n-m}})dx. \] 
 
 Tarsi \cite{Tarsi} later extended Adams' result for the larger space $W^{m,\frac{n}{m}}_{\mathcal{N}}(\Om)$. The key step in her work is to embed $W^{m,\frac{n}{m}}_{\mathcal{N}}(\Om)$ into a Zygmund space. We state her embedding theorem below
 \begin{thm}\label{Tarsi_Thm}
 Let $n >2$ and $\Omega \subset \mathbb{R}^n$ be a bounded domain. Then there is a constant $C_n >0$ such that for all $u \in W^{m,\frac{n}{m}}_{\mathcal{N}}(\Omega)$ with $\|\nabla^m u \|_{\frac{n}{m}} \leq 1$, we have
 \begin{equation}
  \int_{\Om} e^{\beta |u|^{\frac{n}{n-m}}}dx <C_n |\Omega|  \hspace{1cm} \forall \beta \leq \beta_{n,m}               \label{Tarsi}
 \end{equation}
 and the constant $\beta_{n,m}$ appearing in (\ref{Tarsi}) is sharp and $\beta_{n,m}$ is same as in Theorem \ref{Adams_Thm}
 \end{thm}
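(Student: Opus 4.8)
The plan is to reduce the Navier problem to Adams' inequality (Theorem \ref{Adams_Thm}) at the potential level: represent an arbitrary $u \in W^{m,\frac{n}{m}}_{\mathcal{N}}(\Om)$ as a Riesz-type potential of $\nabla^m u$, embed the resulting operator into the Zygmund space $\exp L^{\frac{n}{n-m}}$, and then invoke the sharp exponential estimate for such potentials. First I would exploit the Navier data $\Delta^j u = 0$ on $\partial\Om$ for $0 \le j \le [(m-1)/2]$ to peel off Laplacians one at a time. Setting $w = \nabla^m u$ (so $\|w\|_{\frac{n}{m}} \le 1$), when $m$ is even I write $w_k = \Delta^k u$, observe that $w_0,\dots,w_{m/2-1}$ all vanish on $\partial\Om$ and satisfy $\Delta w_k = w_{k+1}$, and solve each Dirichlet problem with the Green's function $G$ of $-\Delta$ to obtain
\[ u(x) = (-1)^{m/2}\int_{\Om}\!\!\cdots\!\!\int_{\Om} G(x,y_1)\,G(y_1,y_2)\cdots G(y_{m/2-1},y_{m/2})\, w(y_{m/2})\, dy_1\cdots dy_{m/2}. \]
When $m$ is odd I would first represent $v = \Delta^{(m-1)/2}u \in W^{1,\frac{n}{m}}_0(\Om)$ through the standard first-order formula in terms of $\nabla v = \nabla^m u$, and then apply $(m-1)/2$ Green potentials as above.

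The second step is the embedding into the Zygmund space. I would use that the Dirichlet Green's function is positive and dominated by the Newtonian kernel, $0 \le G(x,y) \le c_n |x-y|^{2-n}$; composing $m/2$ such kernels (respectively, $(m-1)/2$ Green kernels together with one first-order gradient kernel when $m$ is odd) and invoking the Riesz composition/semigroup property of these kernels yields the pointwise bound $|u(x)| \le \gamma_{n,m}\,(I_m|w|)(x)$, where $I_m$ denotes the Riesz potential of order $m$ and $\gamma_{n,m}$ is its normalizing constant. This domination is exactly what realizes the continuous embedding $W^{m,\frac{n}{m}}_{\mathcal{N}}(\Om) \hookrightarrow \exp L^{\frac{n}{n-m}}(\Om)$ with the sharp exponent.

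Third, I would apply Adams' key lemma on Riesz potentials: combining O'Neil's rearrangement inequality for the convolution $I_m|w|$ with the one-dimensional Moser–Garsia integral lemma gives $\int_{\Om} \exp\!\big(\beta_{n,m}\,|I_m w|^{\frac{n}{n-m}}\big)\,dx \le C_n|\Om|$ whenever $\|w\|_{\frac{n}{m}} \le 1$, and together with the pointwise domination this yields (\ref{Tarsi}). For sharpness I note that $W^{m,\frac{n}{m}}_0(\Om) \subset W^{m,\frac{n}{m}}_{\mathcal{N}}(\Om)$, so since $\beta_{n,m}$ is already optimal on the smaller space by Theorem \ref{Adams_Thm}, no larger constant can be admissible on the Navier space either; the genuine content is that enlarging to Navier conditions preserves finiteness with the \emph{same} constant.

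The main obstacle I anticipate is controlling the constant in $|u| \le \gamma_{n,m}\, I_m|w|$ precisely enough to retain the sharp exponent. One must show that the leading singularity of the iterated Green's function matches exactly $\gamma_{n,m}|x-y|^{m-n}$, so that the threshold $\beta_{n,m}$ is not degraded, while the difference between $G$ and the fundamental solution (its harmonic, regular part) is handled separately and shown to induce only a bounded contribution that does not affect the exponential integrability exponent.
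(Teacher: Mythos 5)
First, a point of reference: the paper does not prove Theorem \ref{Tarsi_Thm} at all — it is quoted from Tarsi \cite{Tarsi}, whose proof goes through iterated Talenti-type symmetrization and an embedding of $W^{m,\frac{n}{m}}_{\mathcal{N}}(\Omega)$ into a Zygmund space. Your potential-theoretic route (iterated Dirichlet Green functions, domination by Riesz kernels, Adams' theorem for Riesz potentials) is therefore a genuinely different strategy — essentially the one used by Lam--Lu \cite{Lam-Lu} — and for $m$ \emph{even} it is correct and complete: the Navier conditions let you peel off one Laplacian at a time, the maximum principle gives $0 \le G(x,y) \le \gamma(2)^{-1}|x-y|^{2-n}$ with $\gamma(\alpha)=\pi^{n/2}2^{\alpha}\Gamma(\alpha/2)/\Gamma((n-\alpha)/2)$ (so the "regular part" of $G$ that you flag as the main obstacle is a non-issue: it has a sign and is simply discarded), and since all kernels are positive and radial, Tonelli plus the Riesz composition formula give exactly $|u| \le \gamma(m)^{-1}\int_{\Omega}|x-y|^{m-n}|\nabla^m u(y)|\,dy$, whose constant is precisely the one that converts Adams' Riesz-potential estimate into the sharp threshold $\beta_{n,m}$. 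Your sharpness argument (inherited from $W^{m,\frac{n}{m}}_0(\Omega)\subset W^{m,\frac{n}{m}}_{\mathcal{N}}(\Omega)$) is also correct.

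The genuine gap is the case of odd $m\ge 3$, which the theorem — and the paper itself, since $n=2m$ with $m$ of either parity — requires. For odd $m$, Adams' sharp constant does not arise from composing $I_{m-1}$ with the absolute value of the first-order kernel; it arises from the vector kernel $\nabla I_{m+1}$ via $u=\pm\,\nabla I_{m+1}\ast\nabla^m u$, where directional cancellation is kept \emph{before} absolute values are taken. Your plan takes absolute values at the first-order stage, $|v|\le \frac{1}{\omega_{n-1}}\int|x-y|^{1-n}|\nabla v(y)|\,dy$ with $v=\Delta^{(m-1)/2}u$, and then composes positive kernels; this yields $|u|\le \frac{\gamma(1)}{\omega_{n-1}\gamma(m)}\,I_m(|\nabla^m u|)$, and one checks that $\frac{\gamma(1)}{\omega_{n-1}\gamma(m)}$ is \emph{strictly larger} than Adams' constant $\frac{n-m-1}{\gamma(m+1)}=\frac{\Gamma((n-m+1)/2)}{\pi^{n/2}2^m\Gamma((m+1)/2)}$ once $m\ge 3$ is odd. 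Concretely, for $m=3$, $n=6$ your constant is $\frac{1}{3\pi^3}$ while Adams' is $\frac{1}{8\pi^3}$; since the admissible exponent scales like the $(-\frac{n}{n-m})$-th power of this constant, your argument proves \eqref{Tarsi} only for $\beta \le \frac{9}{64}\,\beta_{6,3}$, not up to $\beta_{6,3}$. The fix is not routine within your framework: keeping the cancellation would require a bound of the form $|\nabla_y G_{m+1}(x,y)|\le |\nabla I_{m+1}(x-y)|$ for the iterated Navier Green function, and pointwise domination of Green functions by fundamental solutions does \emph{not} imply domination of their gradients (no maximum principle applies to gradients). This is exactly the point where Tarsi's rearrangement proof diverges from yours: the final odd-order step is handled by a Polya--Szego/symmetrization argument on the gradient rather than by a pointwise kernel estimate.
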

  \begin{Rem}
  Here we remark that the bilinear form 
 \begin{align}
  (u,v) \mapsto \int_{\Omega}\nabla^m u \cdot \nabla^m v= \left\{\begin{array}{lllll}
           \displaystyle\int_{\Omega} \Delta^k u \Delta^k v     \hspace{19mm}   \text{if } m =2k,\\
              \displaystyle \int_{\Omega} \nabla(\Delta^k u) \cdot \nabla (\Delta^k v)  \hspace{3mm} \text{ if } m=2k+1,        \label{poly-norm}
                       \end{array}\right. 
 \end{align}
 defines a scalar product on $W^{m,2}_0(\Omega)$ and $W^{m,2}_{\mathcal{N}}(\Omega)$. Furthermore if $\Omega$ is bounded this scalar product induces a norm equivalent to (\ref{full-norm}).
 \end{Rem}
Therefore the above results imply that the problem $(P)$ nonlinearity of critical growth.
\begin{theorem}  \label{Multiplicity}
There exist positive real numbers  $\lambda_* \leq \lambda^*$ with $\lambda_*$ independent of $h$ such that the problem $(P)$ has at least two positive solutions for all $ \lambda \in (0, \lambda_*)$ and no solution for all $\lambda > \lambda^*$.
\end{theorem}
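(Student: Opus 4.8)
The plan is to treat $(P)$ variationally on the Hilbert space $H:=W^{m,2}_{\mathcal{N}}(\Om)$ equipped with $\langle u,v\rangle=\int_\Om\na^m u\cdot\na^m v$, which by the Remark is equivalent to the full Sobolev norm, and to obtain the two solutions as two distinct critical points of
\[
J_\la(u)=\tfrac12\|\na^m u\|_2^2-\mu\int_\Om F(u)\,dx-\la\int_\Om h\,u\,dx,
\]
where $F(t)=\int_0^t f(s)\,ds$ with $f(s)=s|s|^p e^{s^2}$, truncated by $f(s)=0$ for $s\le 0$ so as to force positivity. Since $n=2m$ gives $\frac{n}{n-m}=2$, the term $e^{u^2}$ is of exactly critical Adams growth, so Theorem~\ref{Tarsi_Thm} guarantees that $J_\la$ is well defined and of class $C^1$ on $H$ with critical points solving $(P)$; the entire difficulty stems from the embedding $u\mapsto e^{u^2}$ being compact only strictly below the sharp threshold $\beta_{2m,m}$.

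\emph{First solution (local minimizer).} For small $\la$ I would set up a mountain-pass geometry near the origin. On a ball $\{\|u\|\le\rho\}$ small enough that Adams' inequality bounds $e^{u^2}$ uniformly in some $L^q$, Hölder's inequality gives $\int_\Om F(u)\le C\|u\|^{p+2}$ and $\int_\Om h\,u\le\|h\|_2\|u\|_2\le C\|u\|$, so that only $\|h\|_2=1$ enters and the resulting threshold will be independent of $h$. Hence
\[
J_\la(u)\ge\rho\big(\tfrac12\rho-C\rho^{p+1}-C\la\big)\ge\de>0\quad\text{on }\{\|u\|=\rho\},
\]
provided $\rho$ is small and $\la<\la_*$, with $\la_*$ depending only on $\Om,m,p$. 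Choosing $w\ge0$ with $\int_\Om h\,w>0$ makes $J_\la(tw)<0$ for small $t$, so $\inf_{\|u\|\le\rho}J_\la<0=J_\la(0)$; this infimum is attained in the interior by weak lower semicontinuity of the Dirichlet part, weak continuity of the linear term, and compactness of $u\mapsto e^{u^2}$ below the Adams threshold, yielding a first critical point $u_0$ with $J_\la(u_0)<0$.

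\emph{Second solution (mountain pass).} Because $e^{u^2}$ forces $\int_\Om F(tw)$ to grow faster than $t^2$, one has $J_\la(tw)\to-\infty$, giving $e\in H$ with $\|e\|>\rho$ and $J_\la(e)<0$; every path from $u_0$ to $e$ crosses $\{\|u\|=\rho\}$, so
\[
c:=\inf_{\ga\in\Ga}\max_{t\in[0,1]}J_\la(\ga(t)),\qquad \Ga=\{\ga\in C([0,1],H):\ga(0)=u_0,\ \ga(1)=e\},
\]
satisfies $c\ge\de$. The \textbf{main obstacle} is compactness: a Palais--Smale sequence at level $c$ may concentrate and lose compactness precisely at the Adams level. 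The crucial step is to show that every $(PS)_c$ sequence with $c<\tfrac12\beta_{2m,m}$ is relatively compact, via a Lions-type concentration--compactness analysis adapted to $\na^m$ proving that no Dirichlet energy concentrates below $\tfrac12\beta_{2m,m}$; the weak limit is then a second critical point $u_1\ne u_0$. To apply this one must verify the strict inequality $c<\tfrac12\beta_{2m,m}$, which I would do by inserting a family of Adams/Moser concentrating test functions into the min--max — this estimate, where the sharp constant $\beta_{2m,m}$ of Theorem~\ref{Tarsi_Thm} and the parity of $m$ enter, is the technical heart of the argument.

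\emph{Non-existence, positivity and ordering.} Positivity of $u_0,u_1$ follows from the truncation together with the positivity-preserving property of the Navier problem: writing $\D u=g\ge0$ as the iterated system $-\De u_j=u_{j-1}$ with homogeneous Dirichlet data and applying the maximum principle $m$ times gives $u\ge0$, and the strong maximum principle upgrades this to $u>0$. For non-existence, let $\phi_1>0$ solve $\D\phi_1=\la_1(\Om)\phi_1$ on $H$; testing $(P)$ against $\phi_1$ and integrating by parts under the Navier data gives
\[
\la_1(\Om)\int_\Om u\,\phi_1=\mu\int_\Om u|u|^p e^{u^2}\phi_1+\la\int_\Om h\,\phi_1.
\]
Since $f(t)/t\to\infty$, for each $K$ there is $C_K$ with $f(t)\ge Kt-C_K$; taking $\mu K=2\la_1(\Om)$ and using $u,\phi_1>0$ with $\int_\Om h\,\phi_1>0$ yields $\la\le\la^*:=\mu C_K\,\frac{\int_\Om\phi_1}{\int_\Om h\,\phi_1}<\infty$, so $(P)$ has no solution for $\la>\la^*$. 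Finally, the admissible set of $\la$ contains $(0,\la_*)$ and is bounded above by $\la^*$, whence $\la_*\le\la^*$, which completes the proof.
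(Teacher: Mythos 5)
Your \emph{Second solution} step contains the genuine gap, and it is exactly at the point you yourself flag as the technical heart: your compactness threshold is stated at the wrong level. You claim that every $(PS)_c$ sequence with $c<\tfrac12\beta_{2m,m}$ is relatively compact, and you propose to verify $c<\tfrac12\beta_{2m,m}$ for your mountain-pass level. But in critical exponential problems the threshold that rules out concentration is not absolute; it must be measured \emph{relative to the energy of the would-be weak limit}. The dangerous scenario is a Palais--Smale sequence $u_n\rightharpoonup u_0$ (weak limits of PS sequences are always critical points here, by the Vitali-type convergence as in Lemma \ref{mountain}, and $u_0$ is a critical point): from $J(u_n)\to c$, $J'(u_n)\to 0$ and the convergence of the nonlinear terms one gets
\begin{equation*}
\tfrac12\int_{\Om}|\na^m(u_n-u_0)|^2\;\longrightarrow\;c-J(u_0),
\end{equation*}
so the Adams-inequality/uniform-integrability argument that excludes concentration needs $c-J(u_0)<\tfrac12\beta_{2m,m}$. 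Since $J(u_0)<0$, your hypothesis $c<\tfrac12\beta_{2m,m}$ leaves open the window $J(u_0)+\tfrac12\beta_{2m,m}\le c<\tfrac12\beta_{2m,m}$, in which the sequence can concentrate on top of $u_0$; then there is no strong convergence, you cannot assert $J(u_1)=c>0>J(u_0)$, and the conclusion $u_1\ne u_0$ collapses. This is precisely why the paper translates the functional to $u_0$, working with $J_0(v)=J(u_0+v)-J(u_0)$ for problem $(\tilde P)$: for $J_0$ the clean bound $c_0<\tfrac12\beta_{2m,m}$ of (\ref{MPL1}) \emph{is} the relative bound, the concentrating Moser--Adams functions give exactly that estimate, and instead of proving relative compactness (which is not available at critical growth, and not needed) the paper shows directly that the weak limit $v_0$ of the translated PS sequence is nonzero --- via Ghoussoub--Preiss around the sphere $H$ when $c_0=0$, and via the Adams/Vitali uniform-integrability argument when $0<c_0<\tfrac12\beta_{2m,m}$. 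Your outline becomes correct if you restate both the compactness claim and the min--max estimate relative to $u_0$, i.e.\ require $c<J(u_0)+\tfrac12\beta_{2m,m}$ (which is what test functions concentrated at $u_0$ actually yield); at that point it is essentially the paper's proof.

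The remainder of your proposal is sound and partly takes a different route from the paper. For the first solution you minimize $J$ directly over a small ball $\{\|u\|\le\rho\}$, using Adams' inequality both for the positive barrier on $\{\|u\|=\rho\}$ and for compactness at small norm; the paper instead follows Tarantello, decomposing the Nehari manifold into $\mathcal{M}^+\cup\mathcal{M}^0\cup\mathcal{M}^-$, showing $\mathcal{M}^0=\{0\}$ for $\lambda<\lambda_*$, and running Ekeland's principle on $\mathcal{M}$ to produce a minimizer in $\mathcal{M}^+$ which is then shown to be a local minimum. Your route is more elementary, and it has the pleasant side effect that every path from $u_0$ to $e$ must cross the sphere $\{\|u\|=\rho\}$, forcing $c\ge\de>0$ and sparing you the degenerate case $c_0=0$ that the paper must treat with the Ghoussoub--Preiss principle. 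Your positivity argument (truncation plus iterated maximum principle, valid for Navier data) and your non-existence argument (testing against $\phi_1$ with a linear lower bound $f(t)\ge Kt-C_K$, then $\lambda_*\le\lambda^*$ by comparing the existence and non-existence ranges) are essentially the paper's.
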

In spite of possible failure of Palais-Smale condition due to the presence of critical exponent we adapt the method of \cite{Tarantello} to prove the existence of the first solution by a decomposition of Nehari manifold into three parts. However for the existence of second solution we rely on the refined version of the Mountain-Pass Lemma introduced by Ghoussoub-Preiss \cite{Preiss}.
\section{Decomposition of Nehari Manifold}
Let $f(u) = \mu |u|^pue^{u^2}.$ The corresponding energy functional to the problem $(P)$ is given by 
\begin{equation}
J(u)= \frac{1}{2} \int_{\Omega}|\nabla^m u|^2- \int_{\Omega} F(u) - \lambda \int_{\Om} hu         \label{energy}
\end{equation}
where $F(u) = \int_0^u f(s)ds.$ As the energy functional is not bounded below on $W^{m,2}_{\mathcal{N}}(\Om)$, we need to study $J(u)$ on the Nehari manifold 
   \begin{equation}
   \mathcal{M}= \{u \in W^{m,2}_{\mathcal{N}}(\Om) \setminus \{0\}: \langle J'(u),u \rangle =0\},
   \end{equation}
where $J'(u)$ denotes the Frechet derivative of $J$ at $u$, and $\langle .,. \rangle$ is the inner product. Here we note that $\mathcal{M}$ contains every nonzero solution of the problem $(P)$.
We note that for any $u \in W^{m,2}_{\mathcal{N}}(\Om),$
\begin{align*}
\langle J'(u),u \rangle &=  \int_{\Omega} |\nabla^m u|^2 - \int_{\Om} f(u)u - \lambda \int_{\Om} hu, \\
\langle J''(u)u,u \rangle &= \int_{\Omega}|\nabla^m u|^2 - \int_{\Om} f'(u)u^2.
\end{align*}
Similarly to the method used in \cite{Tarantello}, We split $\mathcal{M}$ into three parts:
\begin{align*}
\mathcal{M}^0 = \{u \in \mathcal{M}: \langle J''(u)u,u \rangle = 0\},\\
\mathcal{M}^+ = \{u \in \mathcal{M}: \langle J''(u)u,u \rangle > 0\},\\
\mathcal{M}^- = \{u \in \mathcal{M}: \langle J''(u)u,u \rangle < 0\}.
\end{align*}
\section{Topological Properties of \texorpdfstring{$\mathcal{M}^0,\mathcal{M}^+, \mathcal{M}^-$}{Lg}}
Our first aim is to show, for some small $\lambda, \mathcal{M}^0 = \{0\}$. For this let $\zeta >0$, if $p>0$ and $\zeta < \frac{\lambda_1-\mu}{\mu}$ if $p=0$. Let $ \Lambda = \{u \in W^{m,2}_{\mathcal{N}}(\Om): \int_{\Om} |\nabla^m u|^2 \leq (1+\zeta)\int_{\Omega} f'(u)u^2\}.$
Lemma \ref{nonempty} implies that $\Lambda \neq \{0\}$. We now assume the following important hypothesis: 
\begin{align}
 \lambda >0, \|h\|_{L^2(\Om)} = 1, \text{ and}     \notag\\    
\inf_{u \in \Lambda \setminus \{0\}}\left(\mu \int_{\Om}(p+2u^2)|u|^{p+2}e^{u^2} - \lambda \int_{\Om}hu\right)>0.   \label{Nehari1}
\end{align}
The condition (\ref{Nehari1}) forces $\lambda$ to be suitably small. Indeed we can prove the following. 
\begin{proposition}      \label{lambdastar}
Let 
\begin{equation}
\lambda < \mu C_0^{\frac{p+3}{p+4}}|\Om|^{-\left(\frac{p+2}{2p+8}\right)} \label{lambda}   
\end{equation}
 where $C_0 = \inf_{u \in \Lambda \setminus \{0\}} \int_{\Om}(p+2u^2)|u|^{p+2}e^{u^2} >0.$ Then (\ref{Nehari1}) holds.
\end{proposition}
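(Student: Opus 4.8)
The plan is to show, directly and pointwise in $u$, that the quantity inside the infimum in (\ref{Nehari1}) stays bounded below by a fixed positive constant over all of $\La\setminus\{0\}$, the idea being to control the linear term $\la\int_\Om hu$ by a \emph{sublinear} power of the critical term $\int_\Om(p+2u^2)|u|^{p+2}e^{u^2}$, so that the latter eventually dominates. Since by definition the critical term is $\geq C_0>0$ on $\La\setminus\{0\}$, it suffices to make the threshold on $\la$ small enough that the critical term wins even at its smallest value $C_0$.

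First I would estimate the linear term. Using $\|h\|_{L^2(\Om)}=1$ and Cauchy--Schwarz, $\int_\Om hu \le \|u\|_{L^2(\Om)}$. Next, since $p\ge 0$ and $e^{u^2}\ge 1$, the integrand obeys the pointwise minorization $(p+2u^2)|u|^{p+2}e^{u^2}\ge u^2|u|^{p+2}=|u|^{p+4}$, and hence $\int_\Om |u|^{p+4}\le \int_\Om(p+2u^2)|u|^{p+2}e^{u^2}$. Interpolating $L^2$ between $L^{p+4}$ and the finite measure of $\Om$ by H\"older (with exponents $\tfrac{p+4}{2}$ and $\tfrac{p+4}{p+2}$) gives
\[
\|u\|_{L^2(\Om)}\le \Big(\int_\Om |u|^{p+4}\Big)^{\frac{1}{p+4}}\,|\Om|^{\frac{p+2}{2(p+4)}}.
\]

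Chaining these bounds, set $t:=\int_\Om(p+2u^2)|u|^{p+2}e^{u^2}\ge C_0>0$, so that $\la\int_\Om hu\le \la\, t^{\frac{1}{p+4}}|\Om|^{\frac{p+2}{2(p+4)}}$ and therefore
\[
\mu\,t-\la\int_\Om hu\ \ge\ t^{\frac{1}{p+4}}\Big(\mu\, t^{\frac{p+3}{p+4}}-\la\,|\Om|^{\frac{p+2}{2(p+4)}}\Big).
\]
Because $t\ge C_0$ and $\tfrac{p+3}{p+4}>0$, the right-hand side is at least $C_0^{\frac{1}{p+4}}\big(\mu C_0^{\frac{p+3}{p+4}}-\la\,|\Om|^{\frac{p+2}{2(p+4)}}\big)$, a constant independent of $u$. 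Hypothesis (\ref{lambda}), namely $\la<\mu C_0^{\frac{p+3}{p+4}}|\Om|^{-\frac{p+2}{2(p+4)}}$ (note $2p+8=2(p+4)$), is exactly the condition making this constant strictly positive; taking the infimum over $u\in\La\setminus\{0\}$ then yields (\ref{Nehari1}).

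There is no serious analytic obstacle here---the proof is a short chain of H\"older and elementary pointwise inequalities. The only points demanding care are the correct minorization of the integrand, which is where the exponent $p+4$ (and with it the powers $\frac{p+3}{p+4}$ and $\frac{p+2}{2(p+4)}$) is forced, and the bookkeeping of H\"older exponents so that the powers of $C_0$ and $|\Om|$ match the stated threshold. It is worth noting that the strict positivity $C_0>0$ is precisely what prevents the lower bound from collapsing, since it enters only through $t\ge C_0>0$; in fact one could retain the sharper pointwise bound $(p+2u^2)\ge 2u^2$ to gain a harmless factor $2^{1/(p+4)}>1$, so the threshold in (\ref{lambda}) is in any case sufficient (indeed slightly conservative).
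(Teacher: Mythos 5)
Your H\"older/Cauchy--Schwarz chain is correct, and it is in substance identical to \textbf{Step 2} of the paper's own proof: the paper likewise estimates $\la|\int_\Om hu|\le \la\|u\|_{L^2(\Om)}\le \la|\Om|^{\frac{p+2}{2p+8}}\bigl(\int_\Om|u|^{p+4}\bigr)^{\frac{1}{p+4}}$, minorizes $|u|^{p+4}$ pointwise by $(p+2u^2)|u|^{p+2}e^{u^2}$, and absorbs the linear term into the critical one using $t\ge C_0$; your factorization $\mu t-\la\int_\Om hu\ \ge\ t^{\frac{1}{p+4}}\bigl(\mu t^{\frac{p+3}{p+4}}-\la|\Om|^{\frac{p+2}{2p+8}}\bigr)$ is just a rearrangement of the paper's division by $C_0^{\frac{p+3}{p+4}}$, and the monotonicity argument you use to evaluate it at $t=C_0$ is sound.

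However, there is a genuine gap: you take the strict positivity $C_0>0$ as given (``by definition the critical term is $\ge C_0>0$''), whereas this positivity is itself part of what the proposition asserts, and it is exactly where the paper does its real work. The paper's \textbf{Step 1} proves $\inf_{u\in\La\setminus\{0\}}\|u\|_{W^{m,2}_{\mathcal{N}}(\Om)}>0$ by contradiction: if $u_n\in\La\setminus\{0\}$ with $\|u_n\|\to0$, set $v_n=u_n/\|u_n\|$; the constraint defining $\La$ gives $1\le(1+\zeta)\int_\Om f'(u_n)v_n^2$, the Adams--Tarsi embedding (Theorem \ref{Tarsi_Thm}) gives $f'(u_n)\to f'(0)$ in every $L^r(\Om)$, and passing to the limit is then impossible --- immediately when $p>0$ (since $f'(0)=0$), and via the spectral condition $(1+\zeta)\mu<\lambda_1$ when $p=0$. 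Only from this does (\ref{lambda-inf}), i.e.\ $C_0>0$, follow. Without that step, nothing excludes a sequence $u_n\in\La\setminus\{0\}$ with $\|u_n\|\to0$, along which $\int_\Om(p+2u_n^2)|u_n|^{p+2}e^{u_n^2}\to0$, forcing $C_0=0$; then your final lower bound degenerates to $0$ (so (\ref{Nehari1}), which demands a \emph{strictly positive} infimum, does not follow), the hypothesis (\ref{lambda}) becomes unsatisfiable for any $\la>0$, and the later definition $\la_*=\mu C_0^{\frac{p+3}{p+4}}|\Om|^{-\frac{p+2}{2p+8}}$ in the proof of Theorem \ref{Multiplicity} would be vacuous. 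A tell-tale sign of the omission is that your argument never uses the definition of $\La$, nor the standing restrictions on $\mu$ and $\zeta$ (in particular $\mu\in(0,\lambda_1)$ and $\zeta<\frac{\lambda_1-\mu}{\mu}$ when $p=0$); those hypotheses are consumed precisely, and only, in the proof that $C_0>0$.
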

\begin{proof}
\textbf{Step 1:} $\inf_{ u \in \Lambda \setminus \{0\}} \|u\|_{W^{m,2}_{\mathcal{N}}(\Om)} >0.$ \\ 
Assume the contradiction, then there exists a sequence $\{u_n\} \subset \Lambda \setminus \{0\}$ such that $\|u_n\|_{W^{m,2}_{\mathcal{N}}(\Om)} \to 0$ as $n \to \infty$. Let $v_n = \frac{u_n}{\|u_n\|_{W^{m,2}_{\mathcal{N}}(\Om)}}.$ Then $\|v_n\|_{W^{m,2}_{\mathcal{N}}(\Om)} =1$ and $v_n$ satisfies 
\begin{equation}
1 \leq (1+\zeta) \int_{\Om} f'(u_n) v_n^2, \quad \forall n.      \label{lambda1}
\end{equation}
Since $u_n \to 0$ in $W^{m,2}_{\mathcal{N}}(\Om)$, by Adams' embedding for the higher order derivative in Theorem \ref{Tarsi_Thm} we get $f'(u_n) \to f'(0)$ in $L^r(\Om)$ for all $r \geq 1$. Since $v_n$ is bounded in $W^{m,2}_{\mathcal{N}}(\Om)$, $v_n$ has a weak limit say $v$ in $W^{m,2}_{\mathcal{N}}(\Om)$. Certainly $\|v\|_{W^{m,2}_{\mathcal{N}}(\Om)} \leq 1$ and up to a subsequence denote it same as $v_n$ which converges strongly to $v$ in $L^r(\Omega)$ for all $r \geq 1$. Hence from (\ref{lambda1}) we get 
\begin{equation}
\int_{\Omega} |\nabla^m u|^2 \leq 1 \leq (1+\zeta) f'(0) \int_{\Om} v^2.        \label{lambda2}
\end{equation} 
This gives a contradiction if $p>0$ in which case $f'(0)= 0$. If $p=0$, by assumption
$$\int_{\Omega} |\nabla^m u|^2 \geq \lambda_1 \int_{\Om} v^2 > (1+\zeta) \mu \int_{\Om}v^2$$ which gives a contradiction to (\ref{lambda1}) since $f'(0)= \mu$. This proves Step 1.\\ 
It is easy to check that using Step 1 and the definition of $\Lambda$:
\begin{equation}
 \inf_{u \in \Lambda \setminus \{0\}}\int_{\Omega}(p+2u^2)|u|^{p+2}e^{u^2} =C_0 >0.                   \label{lambda-inf}
\end{equation}

\noindent\textbf{Step 2:} Finally we have,
\begin{align*}
 \lambda \left|\int_{\Omega} hu\right| &\leq \lambda \|u\|_{L^2(\Omega)} \leq \lambda |\Omega|^{\frac{p+2}{2p+8}}\left(\int_{\Omega} |u|^{p+4} \right)^{\frac{1}{p+4}}\\
 &\leq \frac{\lambda |\Omega|^{\frac{p+2}{2p+8}}}{\mu (p+2u^2)|u|^{p+2}e^{u^2})^{\frac{p+3}{p+4}}} (\mu \int_{\Omega} (p+2u^2)|u|^{p+2}e^{u^2})\\
 &\leq \left(\frac{\lambda |\Omega|^{\frac{p+2}{2p+8}}}{\mu C_0^{\frac{p+3}{p+4}}}\right) (\mu \int_{\Omega}(p+2u^2)|u|^{p+2}e^{u^2}).
\end{align*}
Hence from the above inequality together with (\ref{lambda}) and (\ref{lambda-inf}) the proof is complete.
\end{proof}
\begin{lem}   \label{Null Set}
Suppose $\lambda >0$ be such that (\ref{Nehari1}) holds. Then $\mathcal{M}^0 = \{0\}$.
\end{lem}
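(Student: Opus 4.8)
The plan is to exploit the fact that, for $u \in \mathcal{M}^0$, the two defining conditions $\langle J'(u),u\rangle = 0$ and $\langle J''(u)u,u\rangle = 0$ can be combined so as to reproduce exactly the quantity appearing in hypothesis (\ref{Nehari1}), which is assumed to be bounded below by a positive constant on $\Lambda \setminus \{0\}$. First I would record the two elementary pointwise identities coming from $f(u) = \mu |u|^p u e^{u^2}$, namely $f(u)u = \mu |u|^{p+2} e^{u^2}$ and $f'(u)u^2 = \mu (p+1+2u^2)|u|^{p+2}e^{u^2}$. In particular $f'(u)u^2 \geq 0$ pointwise, a fact I will use to locate $u$ inside $\Lambda$.

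Next, I would take an arbitrary $u \in \mathcal{M}^0$. The condition $\langle J''(u)u,u\rangle = 0$ reads $\int_{\Om} |\nabla^m u|^2 = \int_{\Om} f'(u)u^2$. Since $f'(u)u^2 \geq 0$ and $\zeta > 0$, this immediately yields $\int_{\Om} |\nabla^m u|^2 \leq (1+\zeta)\int_{\Om} f'(u)u^2$, so that $u \in \Lambda$. This is the decisive observation: membership in $\mathcal{M}^0$ automatically forces $u$ into the set $\Lambda$ on which (\ref{Nehari1}) is postulated.

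Then I would subtract the Nehari identity $\langle J'(u),u\rangle = 0$ from $\langle J''(u)u,u\rangle = 0$. The $\int_{\Om}|\nabla^m u|^2$ terms cancel, and inserting the two identities above gives
\[ \int_{\Om} f(u)u - \int_{\Om} f'(u)u^2 + \lambda \int_{\Om} hu = -\mu \int_{\Om}(p+2u^2)|u|^{p+2}e^{u^2} + \lambda \int_{\Om} hu = 0, \]
that is, $\mu \int_{\Om}(p+2u^2)|u|^{p+2}e^{u^2} - \lambda \int_{\Om} hu = 0$. If $u \neq 0$, then $u \in \Lambda \setminus \{0\}$ by the previous step, so by (\ref{Nehari1}) this left-hand side is at least the strictly positive infimum, contradicting its vanishing. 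Hence $\mathcal{M}^0$ contains no nonzero element, which is the assertion $\mathcal{M}^0 = \{0\}$.

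I expect the only subtle point to be the verification that $u \in \Lambda$, which hinges on the sign condition $f'(u)u^2 \geq 0$ together with $\zeta > 0$; everything else is a matter of bookkeeping the two algebraic identities for $f(u)u$ and $f'(u)u^2$ and forming the correct linear combination of the two Nehari conditions. Notably, no compactness or Palais--Smale type argument enters at this stage, so the critical-growth difficulties play no role in this particular lemma.
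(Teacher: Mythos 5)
Your proof is correct and follows essentially the same route as the paper: both derive $u \in \Lambda \setminus \{0\}$ from the degeneracy condition $\int_{\Om}|\nabla^m u|^2 = \int_{\Om} f'(u)u^2$, then subtract the two Nehari identities to obtain $\mu \int_{\Om}(p+2u^2)|u|^{p+2}e^{u^2} - \lambda \int_{\Om} hu = 0$, contradicting (\ref{Nehari1}). Your handling of the $\Lambda$-membership via $f'(u)u^2 \geq 0$ and the non-strict inequality is, if anything, marginally cleaner than the paper's strict-inequality phrasing, but it is the same argument.
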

\begin{proof}
Let $u \in \mathcal{M}^0, u \neq 0$. Then we have 
\begin{align}
\int_{\Omega}|\nabla^m u|^2 &= \int_{\Om} f(u) u + \lambda \int_{\Om}hu,           \label{Nehari2}\\
\int_{\Omega}|\nabla^m u|^2 &= \int_{\Om} f'(u)u^2.                          \label{Nehari3}
\end{align}
We note that from (\ref{Nehari3}) $$\int_{\Omega}|\nabla^m u|^2 = \int_{\Om} f'(u)u^2 < (1+\zeta) \int_{\Om} f'(u)u^2$$ it implies that $ u \in \Lambda \setminus \{0\}$. From these two expressions we get
$$\lambda \int_{\Om} hu = \int_{\Om}(f'(u)u-f(u))u = \mu \int_{\Om}(p+2u^2)|u|^{p+2}e^{u^2}$$
which violates the condition (\ref{Nehari1}). Therefore $\mathcal{M}^0 = \{0\}$.
\end{proof}

Next we are going to discuss the topological properties of $\mathcal{M}^+$ and $\mathcal{M}^-$. Given $u \in {W^{m,2}_{\mathcal{N}}(\Om)} \setminus \{0\}$, we define $\xi_u: \mathbb{R}^+ \to \mathbb{R}$ by 
\begin{equation}
\xi_u(s) = s \int_{\Om} |\nabla^m u|^2 - \int_{\Om} f(su)u.      \label{xi}
\end{equation} 
The choice of the above function is consequence of the following expression,
\begin{equation*}
\langle J'(su), su \rangle = s \left(s \int_{\Om} |\nabla^m u|^2 - \int_{\Om} f(su)u - \lambda \int_{\Om}hu \right).      
\end{equation*}
So, $\xi_u(s)=\lambda \int_{\Om}hu$ if and only if $su \in \mathcal{M}$, for $s >0$.

Now we are ready to state the following lemma.
\begin{lem}      \label{nonempty}
For every $u \in W^{m,2}_{\mathcal{N}}(\Om) \setminus \{0\}$ there exists a unique $s_* =s_*(u) >0$ such that $\xi_u(.)$ has its maximum at $s_*$ with $\xi_u(s_*)>0$. Also there holds $s_*u \in \Lambda \setminus \{0\}$.
\end{lem}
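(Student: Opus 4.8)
The plan is to study $\xi_u$ through its derivative and reduce everything to the monotonicity of a single auxiliary function. Writing $A := \int_{\Om}|\nabla^m u|^2 > 0$ and differentiating (\ref{xi}) in $s$ gives
\[
\xi_u'(s) = A - \int_{\Om} f'(su)\,u^2 =: A - h(s),
\]
where, using $f'(t) = \mu|t|^p e^{t^2}\big[(p+1)+2t^2\big]$,
\[
h(s) = \mu\, s^p \int_{\Om} |u|^{p+2} e^{s^2 u^2}\big[(p+1)+2s^2u^2\big]\,dx.
\]
All these integrals are finite for every fixed $s$: although $s^2u^2$ may exceed the critical threshold, the exponential integrability of a fixed function $u \in W^{m,2}_{\mathcal{N}}(\Om)$ holds for every exponent (the sharpness in Theorem \ref{Tarsi_Thm} concerns the supremum over the unit ball, not a single function), so each integrand lies in $L^1(\Om)$ and $h$ is well defined.

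The key step is to show $h$ is continuous and strictly increasing on $(0,\infty)$. Fixing $x$ and setting $w = u(x)^2$, I would differentiate the pointwise factor $\phi(s) = s^p e^{s^2 w}\big[(p+1)+2s^2w\big]$ and obtain
\[
\phi'(s) = e^{s^2 w}\, s^{p-1}\Big[p(p+1) + 2w s^2(2p+3) + 4w^2 s^4\Big] \ge 0,
\]
with strict inequality wherever $u(x)\neq 0$. Hence $h(s) = \mu\int_\Om |u|^{p+2}\phi(s)\,dx$ is strictly increasing; dominating by the value at the right endpoint of any bounded interval yields continuity by dominated convergence, while monotone convergence gives $h(s)\to\infty$ as $s\to\infty$.

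Next I would pin down the behaviour at the origin. If $p>0$ then $h(0^+)=0$, so $\xi_u'(0^+)=A>0$. If $p=0$ then $h(0^+)=\mu\int_\Om u^2$, and since $A=\int_\Om|\nabla^m u|^2 \ge \lambda_1\int_\Om u^2 > \mu\int_\Om u^2$ (using $\mu<\lambda_1(\Om)$), again $\xi_u'(0^+)>0$. Thus in all cases $\xi_u'$ is strictly decreasing from a positive value to $-\infty$, so it has a unique zero $s_*>0$, at which $\xi_u$ attains its unique global maximum; since $\xi_u(0)=0$ and $\xi_u$ increases on $(0,s_*)$, we get $\xi_u(s_*)>0$.

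Finally, the membership $s_*u\in\Lambda\setminus\{0\}$ follows from the critical point relation $A=h(s_*)$: indeed
\[
\int_\Om |\nabla^m(s_*u)|^2 = s_*^2 A = s_*^2 h(s_*) = \int_\Om f'(s_*u)(s_*u)^2 \le (1+\zeta)\int_\Om f'(s_*u)(s_*u)^2,
\]
so $s_*u$ satisfies the defining inequality of $\Lambda$, and $s_*u\neq 0$ since $s_*>0$ and $u\neq 0$. The main obstacle is the monotonicity of $h$: the sign of $\phi'(s)$ must be checked directly, and the borderline case $p=0$ as $s\to 0^+$ is exactly where the hypothesis $\mu<\lambda_1(\Om)$ is needed to keep $\xi_u'(0^+)>0$.
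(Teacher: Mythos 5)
Your proof is correct and follows essentially the same route as the paper: your monotonicity of $h(s)=\int_\Om f'(su)u^2$ is exactly the paper's strict concavity of $\xi_u$ (since $h'(s)=-\xi_u''(s)$), the sign analysis at $s\to 0^+$ using the range of $\mu$ and at $s\to\infty$ is the same, and the membership $s_*u\in\Lambda\setminus\{0\}$ is obtained from the same critical-point identity. The only (harmless) differences are that you deduce $\xi_u(s_*)>0$ from $\xi_u(0)=0$ and monotonicity on $(0,s_*)$ instead of the paper's explicit formula $\xi_u(s_*)=\frac{\mu}{s_*}\int_\Om (p+2(s_*u)^2)|s_*u|^{p+2}e^{(s_*u)^2}$, and that you make explicit two points the paper leaves implicit (finiteness of the exponential integrals for a fixed $u$, and the Poincar\'e inequality in the $p=0$ case).
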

\begin{proof}
Differentiating (\ref{xi}) we have, 
\begin{equation}
\xi'_u(s) = \int_{\Om} |\nabla^m u|^2- \int_{\Om} f'(su)u^2.     \label{xi1}
\end{equation}
Observe that, 
\begin{align}
s^2 \xi'_u(s)&=  \int_{\Om}|\nabla^m (su)|^2 - \int_{\Om}f'(su)(su)^2     \notag \\
             &= \langle J''(su)su,su \rangle.                  \label{xi2}
\end{align}
Now we note that, $\xi_u(.)$ is strictly concave function on $\mathbb{R}^+$, since 
\begin{equation}
\xi''_u(s)=-\int_{\Om}f''(su)u^3 <0.               \label{xi3}
\end{equation}
Also from the range of $\mu$ we get 
\begin{align*}
\lim_{s \to 0+} \xi'_u(s) >0 \text{ and}\\
\lim_{s \to \infty} \xi_u(s) = -\infty.
\end{align*}
Hence there exists a unique maximum point of $\xi_u(.),$ say $s_* = s_*(u)>0$. Now using (\ref{xi1}) at $s =s_*$ in the definition of $\xi_u$, we deduce,
 \begin{align}
 \xi_u(s_*)&= s_* \int_{\Om} f'(s_*u) u^2 - \int_{\Om} f(s_*u)u,\quad \text{since } \xi'_u(s_*)=0  \notag\\
           &= \frac{1}{s_*} \int_{\Om} (f'(s_*u)s_*u - f(s_*u))s_*u  \notag\\
           &=\frac{\mu}{s_*} \int_{\Om} (p+2(s_*u)^2)|s_*u|^{p+2}e^{(s_*u)^2} > 0.     \label{xi4}
 \end{align}
 here we note that $f'(s)s-f(s) =\mu(p+2s^2)|s|^p s e^{s^2}$. Finally $$s_* \xi'_u(s_*) = \int_{\Om} |\nabla^m(s_*u)|^2 - \int_{\Om}f'(s_*u)(s_*u)^2 =0$$ which implies that $s_*u \in \Lambda \setminus \{0\}$.
\end{proof}

\begin{lem}
Let $\lambda$ be such that (\ref{Nehari1}) holds. Then, for every $u \in W^{m,2}_{\mathcal{N}}(\Om) \setminus \{0\}$, there exists a unique $s_ = s_-(u)>0$ such that $s_-u \in \mathcal{M}^-, s_- > s_*$ and $J(s_-u) = \max_{s \geq s_*} J(su) \hspace{2mm}\forall s \in [s_*, \infty), s \neq s_-.$ Also if we assume $\int_{\Om} hu >0$, then there exists a unique $s_+=s_+(u) >0$ such that $s_+u \in \mathcal{M}^+$. In particular $ s_+ <s_*$ and $J(s_+u) \leq J(su)$ for all $s \in [0, s_-].$      \label{maxima-minima}
\end{lem}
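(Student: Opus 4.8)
The plan is to analyze the function $\xi_u$ from Lemma~\ref{nonempty} together with the energy $J$ along the ray $s \mapsto su$. Recall that $su \in \mathcal{M}$ exactly when $\xi_u(s) = \lambda\int_\Om hu$, and that $\xi_u$ is strictly concave on $\R^+$ with a unique maximum at $s_*$ where $\xi_u(s_*) > 0$, with $\lim_{s\to 0^+}\xi_u(s)=0$ (since $f(0)=0$) and $\xi_u(s)\to -\infty$ as $s\to\infty$. The sign condition coming from (\ref{Nehari1}) — which forces $\lambda\int_\Om hu < \xi_u(s_*)$, as I verify below — is exactly what guarantees the desired intersections.

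\textbf{Construction of $s_-$.} First I would treat the branch $s > s_*$. On $[s_*, \infty)$ the function $\xi_u$ strictly decreases from $\xi_u(s_*) > 0$ down to $-\infty$. The key claim is that $\lambda\int_\Om hu < \xi_u(s_*)$, so that the horizontal level $\lambda\int_\Om hu$ is met exactly once on this branch, at a unique $s_- > s_*$. To see the claim, note $s_*u \in \Lambda\setminus\{0\}$ by Lemma~\ref{nonempty}, hence by hypothesis (\ref{Nehari1}) applied to $s_*u$ we have $\mu\int_\Om (p + 2(s_*u)^2)|s_*u|^{p+2}e^{(s_*u)^2} - \lambda\int_\Om h(s_*u) > 0$; dividing by $s_*$ and comparing with the formula (\ref{xi4}) for $\xi_u(s_*)$ gives precisely $\xi_u(s_*) > \lambda\int_\Om hu$. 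Since at $s_-$ we have $su\in\mathcal{M}$ and $s_- > s_*$ lies strictly to the right of the maximum, (\ref{xi2}) combined with $\xi'_u(s_-) < 0$ shows $\langle J''(s_-u)s_-u, s_-u\rangle = s_-^2\xi'_u(s_-) < 0$, so $s_-u \in \mathcal{M}^-$. For the maximization statement, I would differentiate $g(s) := J(su)$ and observe $g'(s) = \langle J'(su), u\rangle = s\bigl(\xi_u(s) - \lambda\int_\Om hu\bigr)$ (up to the factor in $\langle J'(su), su\rangle$). On $(s_*, \infty)$ the bracket $\xi_u(s) - \lambda\int_\Om hu$ is positive for $s < s_-$ and negative for $s > s_-$, so $g$ increases then decreases, giving a strict maximum of $J(su)$ over $[s_*, \infty)$ at $s_-$.

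\textbf{Construction of $s_+$ under $\int_\Om hu > 0$.} Now I consider the branch $0 < s < s_*$. Here $\xi_u$ strictly increases from $\xi_u(0^+) = 0$ up to $\xi_u(s_*) > \lambda\int_\Om hu$. Because $\int_\Om hu > 0$, the level $\lambda\int_\Om hu > 0$ is strictly between $0$ and $\xi_u(s_*)$, so by strict monotonicity it is attained exactly once on $(0, s_*)$, yielding a unique $s_+ \in (0, s_*)$ with $s_+u \in \mathcal{M}$. Since $s_+ < s_*$ is left of the maximum, $\xi'_u(s_+) > 0$, so again by (\ref{xi2}), $\langle J''(s_+u)s_+u, s_+u\rangle = s_+^2\xi'_u(s_+) > 0$, placing $s_+u \in \mathcal{M}^+$. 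Finally, using the sign analysis of $g'(s) = s\bigl(\xi_u(s) - \lambda\int_\Om hu\bigr)$ on $[0, s_-]$: the bracket is negative on $(0, s_+)$, positive on $(s_+, s_-)$, so $s_+$ is a local minimum of $g$; comparing the two critical points and the endpoint behaviour shows $J(s_+u) \le J(su)$ for all $s \in [0, s_-]$.

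The main obstacle, and the place requiring the most care, is verifying the strict inequality $\lambda\int_\Om hu < \xi_u(s_*)$, since everything else is a clean consequence of the strict concavity established in Lemma~\ref{nonempty}. This inequality is exactly where hypothesis (\ref{Nehari1}) enters, and one must be careful that (\ref{Nehari1}) is stated as an infimum over all of $\Lambda\setminus\{0\}$ while it is applied at the specific point $s_*u$; the fact that $s_*u \in \Lambda\setminus\{0\}$ (guaranteed by Lemma~\ref{nonempty}) is what licenses this. A secondary point to handle cleanly is the case $\int_\Om hu \le 0$, where $s_+$ need not exist, which is why the existence of $s_+$ is stated only under the extra assumption $\int_\Om hu > 0$; I would make sure the monotonicity argument for $s_+$ genuinely uses positivity of the level $\lambda\int_\Om hu$.
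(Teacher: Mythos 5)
Your proposal is correct and follows essentially the same route as the paper's proof: both obtain the key inequality $\xi_u(s_*) > \lambda\int_{\Om} hu$ by applying (\ref{Nehari1}) at the point $s_*u \in \Lambda\setminus\{0\}$ (licensed by Lemma \ref{nonempty}) via the formula (\ref{xi4}), and then read off $s_-$ and $s_+$ from the sign of $\rho_u'(s)=\xi_u(s)-\lambda\int_{\Om}hu$ on the two monotone branches of the strictly concave map $\xi_u$. One minor slip, which you flag yourself and which does not affect the sign analysis since $s>0$: the correct identity is $g'(s)=\langle J'(su),u\rangle = \xi_u(s)-\lambda\int_{\Om}hu$, without the extra factor of $s$.
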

\begin{proof}
Define the functional $\rho_u :[0, \infty) \to \mathbb{R}$ by $\rho_u(s) = J(su)$. Then it is easy to verify that $\rho_u \in C^2([0,\infty],\mathbb{R}) \cap C((0,\infty),\mathbb{R})$. Then we have $$\rho'_u(s) = \xi_u(s)-\lambda \int_{\Om} hu, \hspace{2mm}\rho''_u(s)= \xi'_u(s), \hspace{1mm} \forall t >0.$$
Now from (\ref{Nehari1}) and (\ref{xi4}) we have, 
\begin{equation*}
\xi_u(s_*) - \lambda \int_{\Om} hu = \frac{1}{s_*}\left\{ \mu \int_{\Om} (p+2(s_*u)^2)|s_*u|^{p+2}e^{(s_*u)^2} - \lambda \int _{\Om} h(s_*u)\right\} >0
\end{equation*}
Since $\xi_u(.)$ is strictly decreasing in $(s_*,\infty)$ and $\lim_{t \to \infty} \xi_u(s) = -\infty$, there exists a unique $s_- = s_-(u) > s_*$ such that $\xi_u(s_-) = \lambda \int_{\Om} hu.$ That is $s_-u \in \mathcal{M}$. One has $s_+ > s_*$ and $\rho_u'(s_) <0$, we get $s_-u \in \mathcal{M^-}$.

On the other hand when $\int_{\Om} hu >0$ we have $\lim_{s \to 0+} \xi_u(s) <0$ and which gives for $s$ close to $0$, $\xi_u(s)- \lambda \int_{\Om} hu <0$. Hence there exists a unique $s_+$ such that $\xi_u(s_+) =  \lambda \int_{\Om} hu$ which implies $s_+u \in \mathcal{M}.$ From the graph we see that $\xi_u(.)$ is strictly decreasing in $(0,s_*)$. Hence we have $s_+u \in \mathcal{M}^+$. 

And the remaining properties of $s_-, s_+$ can be proved by analyzing the identity $\rho_u(s) = \xi_u(s) - \lambda \int_{\Om}hu.$
\end{proof}
\begin{Rem}
If we define the positive cone $\mathcal{P}=\{ u \in W^{m,2}_{\mathcal{N}}(\Om): \int_{\Om}hu >0\}$ in $W^{m,2}_{\mathcal{N}}(\Om)$. Then we note that $\mathcal{M^+} \subset \mathcal{P}$.
\end{Rem}
The next corollary shows some topological properties of $\mathcal{M}^+, \mathcal{M}^-$.
\begin{corl}
Let $S_{W^{m,2}_{\mathcal{N}}(\Om)} = \{u \in W^{m,2}_{\mathcal{N}}(\Om): \|u\|_{W^{m,2}_{\mathcal{N}}(\Om)} = 1\}$. Then there exists a diffeomorphism  $S^+: S_{W^{m,2}_{\mathcal{N}}(\Om)} \to \mathcal{M}^-$ defined by $S^+(u) = s_+(u)u$. Also $\mathcal{M}^+$ is homeomorphic to $S_{W^{m,2}_{\mathcal{N}}(\Om)} \cap \mathcal{P}$.
\end{corl}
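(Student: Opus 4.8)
The plan is to realize both maps as radial rescalings whose scaling factors are the $s_\pm(u)$ furnished by Lemma \ref{maxima-minima}, and to upgrade the \emph{uniqueness} obtained there into \emph{smooth dependence} on $u$ by means of the implicit function theorem. For the first assertion I would define the map onto $\mathcal{M}^-$ by $u\mapsto s_-(u)u$, where $s_-(u)>s_*(u)$ is the unique scalar produced in Lemma \ref{maxima-minima} (note that, since $s_-(u)u\in\mathcal{M}^-$, this is the map the corollary intends, the scalar being $s_-$ rather than $s_+$). The candidate inverse is the normalization $v\mapsto v/\|v\|_{W^{m,2}_{\mathcal{N}}(\Om)}$, which is well defined because $\mathcal{M}^-\subset W^{m,2}_{\mathcal{N}}(\Om)\setminus\{0\}$. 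That the two maps are mutually inverse is exactly where the uniqueness in Lemma \ref{maxima-minima} enters: each ray $\{su:s>0\}$ meets $\mathcal{M}^-$ in precisely one point, namely the unique $s_-$ lying in the region $s>s_*$ on which $\xi_u$ is strictly decreasing, so the normalization sends $v\in\mathcal{M}^-$ to a point $u$ on the sphere with $s_-(u)=\|v\|_{W^{m,2}_{\mathcal{N}}(\Om)}$, giving a bijection.

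For the regularity I would introduce the functional $g(s,u)=\xi_u(s)-\lambda\int_{\Om}hu$, so that by (\ref{xi}) and the discussion preceding (\ref{xi4}) one has $su\in\mathcal{M}$ (for $s>0$) exactly when $g(s,u)=0$, with $\partial_s g(s,u)=\xi'_u(s)$ and, by (\ref{xi2}), $s^2\partial_s g(s,u)=\langle J''(su)su,su\rangle$. At the point $(s_-(u),u)$ the membership $s_-(u)u\in\mathcal{M}^-$ gives $\langle J''(s_-u)s_-u,s_-u\rangle<0$, hence $\partial_s g(s_-(u),u)\neq0$. The implicit function theorem then yields that $s_-$ is a $C^1$ (indeed as smooth as $f$ permits) function of $u$ on $S_{W^{m,2}_{\mathcal{N}}(\Om)}$, so $u\mapsto s_-(u)u$ is $C^1$; as its inverse is the manifestly smooth normalization map, the map is a diffeomorphism onto $\mathcal{M}^-$.

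The second assertion is entirely parallel, with $s_+(u)$ in place of $s_-(u)$, but now carried out over $S_{W^{m,2}_{\mathcal{N}}(\Om)}\cap\mathcal{P}$. Lemma \ref{maxima-minima} produces $s_+(u)$ precisely under the hypothesis $\int_{\Om}hu>0$, i.e.\ for $u\in\mathcal{P}$; conversely the Remark following that lemma gives $\mathcal{M}^+\subset\mathcal{P}$, so the normalization map indeed lands in $S_{W^{m,2}_{\mathcal{N}}(\Om)}\cap\mathcal{P}$ and supplies the inverse of $u\mapsto s_+(u)u$. The nondegeneracy required for the implicit function theorem is here $\partial_s g(s_+(u),u)\neq0$, which holds because $s_+u\in\mathcal{M}^+$ forces $\langle J''(s_+u)s_+u,s_+u\rangle>0$; this gives continuity of $s_+$ and hence a homeomorphism of $S_{W^{m,2}_{\mathcal{N}}(\Om)}\cap\mathcal{P}$ onto $\mathcal{M}^+$.

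The step I expect to be the genuine obstacle is verifying that $g$ is jointly $C^1$ in $(s,u)$ in the exponential-growth regime, that is, that the superposition operators $u\mapsto\int_{\Om}f(su)u$ and $u\mapsto\int_{\Om}f'(su)u^2$ are continuously differentiable. This is not routine, since $f(u)=\mu|u|^pue^{u^2}$ has critical exponential growth; the required local boundedness and continuity of these Nemytskii-type operators rest on the Adams--Tarsi inequality (Theorem \ref{Tarsi_Thm}) to gain integrability of $e^{(su)^2}$ against arbitrary $L^r$ factors. The complementary delicate point is already secured: it is exactly Lemma \ref{Null Set}, namely $\mathcal{M}^0=\{0\}$ under the smallness condition (\ref{Nehari1}) on $\lambda$, that keeps $\partial_s g\neq0$ along $\mathcal{M}^\pm$ and thereby makes the implicit function theorem applicable throughout.
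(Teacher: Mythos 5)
Your proposal is correct and takes essentially the same approach as the paper: the paper likewise obtains continuity of the scaling factor from the implicit function theorem applied to $(s,u)\mapsto \xi_u(s)-\lambda\int_{\Om}hu$ and takes the normalization $w\mapsto w/\|w\|_{W^{m,2}_{\mathcal{N}}(\Om)}$ as the inverse map. Your reading of the statement is also the intended one --- the scalar carrying the sphere onto $\mathcal{M}^-$ is $s_-$ (the notation $s_+$ in the corollary is a slip) --- and your explicit verification of the nondegeneracy $\xi_u'(s_\pm)\neq 0$ via (\ref{xi2}) and Lemma \ref{Null Set} only fills in details the paper leaves implicit.
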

\begin{proof}
The function $S^+$ is continuous because $s_+$ is continuous as an application of implicit function theorem applied to $(s,u) \to \xi_u(s) - \lambda \int_{\Om}hu$. And we deduce the continuity of $(S^+)^{-1}$ by the fact that $(S^+)^{-1}(w) = \frac{w}{\|w\|}.$ In a similar manner we can prove that $ \mathcal{M}^+$ is homeomorphic to $S_{W^{m,2}_{\mathcal{N}}(\Om)} \cap \mathcal{P}$.
\end{proof}

Relying on the embedding of $W^{m,2}_{\mathcal{N}}(\Om) \hookrightarrow L^q(\Om)$ for all $1 \leq q <\infty$ and using the estimate $F(s) \leq \frac{\mu|s|^p}{2}(e^{s^2}-1),$ for all $s \in \mathbb{R}$ we have the following lemma on the lower bound and upper bound.
\begin{lem}   \label{upper-lower}
There exists $C_1, C_2>0$ such that $$ -C_2 \lambda^{2p+8} \geq \theta_0 \geq -C_1 \lambda^{\frac{p+3}{p+4}}.$$ Where, $\theta_0 = \inf\{J(u) : u \in \mathcal{M}\}.$ 
\end{lem}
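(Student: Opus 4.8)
The plan is to prove the two bounds separately, after first rewriting $J$ on $\mathcal{M}$ so that the exponential nonlinearity enters only through a manifestly nonnegative density. For $u\in\mathcal{M}$ the Nehari identity $\int_\Omega|\nabla^m u|^2=\int_\Omega f(u)u+\lambda\int_\Omega hu$ lets me eliminate the Dirichlet term and write
\[
J(u)=\tfrac12\int_\Omega f(u)u-\int_\Omega F(u)-\tfrac{\lambda}{2}\int_\Omega hu .
\]
The whole problem then reduces to controlling the density $\tfrac12 f(u)u-F(u)$ from below and balancing it against the linear forcing $\tfrac{\lambda}{2}\int_\Omega hu$.

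For the lower bound I would first record a pointwise inequality for the nonlinearity. Using $F(s)\le\frac{\mu|s|^{p}}{2}\bigl(e^{s^{2}}-1\bigr)$ one gets
\[
\tfrac12 f(s)s-F(s)\ \ge\ \frac{\mu|s|^{p}}{2}\bigl[(s^{2}-1)e^{s^{2}}+1\bigr]\ \ge\ \frac{\mu}{4}|s|^{p+4},
\]
the last step because $g(t):=(t-1)e^{t}+1$ satisfies $g(0)=0$ and $g'(t)=te^{t}\ge t$, so comparison with $t^{2}/2$ gives $g(t)\ge t^{2}/2$. Hence $J(u)\ge\frac{\mu}{4}\int_\Omega|u|^{p+4}-\frac{\lambda}{2}\bigl|\int_\Omega hu\bigr|$. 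Bounding the linear term by Cauchy--Schwarz together with the Hölder estimate used in the proof of Proposition \ref{lambdastar}, namely $\bigl|\int_\Omega hu\bigr|\le\|u\|_{L^{2}}\le|\Omega|^{\frac{p+2}{2p+8}}\bigl(\int_\Omega|u|^{p+4}\bigr)^{1/(p+4)}$, reduces everything to the one–variable minimisation of $t\mapsto\frac{\mu}{4}t^{\,p+4}-\frac{\lambda}{2}|\Omega|^{\frac{p+2}{2p+8}}t$ over $t=\|u\|_{L^{p+4}}\ge0$. Young's inequality with conjugate exponents $p+4$ and $\frac{p+4}{p+3}$ absorbs the linear term into the superlinear one; explicit minimisation yields $\theta_{0}\ge-c\,\lambda^{\frac{p+4}{p+3}}$, and since $\frac{p+4}{p+3}>\frac{p+3}{p+4}$ this gives in particular $\theta_{0}\ge-C_{1}\lambda^{\frac{p+3}{p+4}}$ for $\lambda$ in the admissible range, as stated.

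For the upper bound it suffices to produce a single element of $\mathcal{M}$ with sufficiently negative energy. Fix any $u_{0}\in W^{m,2}_{\mathcal{N}}(\Omega)\setminus\{0\}$ with $\int_\Omega hu_{0}>0$ (possible since $h\ge0$ and $\|h\|_{L^{2}}=1$). By Lemma \ref{maxima-minima} there is $s_{+}=s_{+}(u_{0})>0$ with $s_{+}u_{0}\in\mathcal{M}^{+}$ and $J(s_{+}u_{0})\le J(su_{0})$ for every $s\in[0,s_{-}]$, so $\theta_{0}\le J(su_{0})$ for each such $s$. Testing with a small amplitude $s=s_{\lambda}$ of order $\lambda$ (which lies in $[0,s_{-}]$ once $\lambda$ is small, since $s_{-}\ge s_{*}$ stays bounded away from $0$ for fixed $u_0$) and expanding
\[
J(s_{\lambda}u_{0})=a\,s_{\lambda}^{2}-\lambda s_{\lambda}\int_\Omega hu_{0}+o(s_{\lambda}^{2}),
\]
with $a=\frac12\int_\Omega|\nabla^m u_{0}|^{2}$ when $p>0$ and $a=\frac12\bigl(\int_\Omega|\nabla^m u_{0}|^{2}-\mu\int_\Omega u_{0}^{2}\bigr)>0$ when $p=0$ (positivity from $\mu<\lambda_{1}$), the leading balance at $s_{\lambda}\sim\lambda$ forces $J(s_{\lambda}u_{0})\le-c\lambda^{2}$. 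As $2<2p+8$, this yields $\theta_{0}\le-C_{2}\lambda^{2p+8}$ for small $\lambda$.

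The only genuine difficulty is the lower bound: since $J$ is unbounded below on all of $W^{m,2}_{\mathcal{N}}(\Omega)$, boundedness of $\theta_{0}$ must come entirely from the Nehari constraint. The mechanism is the algebraic positivity $\tfrac12 f(u)u-F(u)\ge\frac{\mu}{4}|u|^{p+4}$, which shows that on $\mathcal{M}$ the exponential nonlinearity contributes a coercive $L^{p+4}$ density; once this is in hand the estimate is a routine Young balance against the linear forcing, and the exponential growth plays no further role because it is already tamed by $F(s)\le\frac{\mu|s|^{p}}{2}(e^{s^{2}}-1)$. The upper bound is comparatively soft, using only the $\mathcal{M}^{+}$ part of the decomposition established in Lemma \ref{maxima-minima}.
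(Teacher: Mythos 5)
Your proof is correct, and for the only half of the lemma that the paper actually proves it follows the same route: eliminate the Dirichlet term via the Nehari identity, bound the density $\tfrac12 f(u)u-F(u)$ below using $F(s)\le\frac{\mu|s|^{p}}{2}(e^{s^{2}}-1)$ and the pointwise inequality $(s^{2}-1)e^{s^{2}}+1\ge c\,s^{4}$, then apply H\"older with $\|h\|_{L^{2}}=1$ and minimize a one-variable function of $\|u\|_{L^{p+4}}$. Your derivation of the explicit constant $c=\tfrac12$ via $g(t)=(t-1)e^{t}+1$, $g'(t)=te^{t}\ge t$ is a cleaner version of the paper's unproved claim, and you correctly flag a point the paper glosses over: the minimization yields $-c\lambda^{\frac{p+4}{p+3}}$ (this is also how the paper's own proof ends, in contradiction with the exponent $\frac{p+3}{p+4}$ in its statement), and converting to the stated bound requires $\lambda$ to be bounded. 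Where you genuinely add content is the upper bound: the paper dismisses it with ``in a similar fashion we can prove the upper bound,'' whereas you give a complete argument --- choose $u_{0}$ with $\int_{\Omega}hu_{0}>0$, invoke Lemma \ref{maxima-minima} to get $\theta_{0}\le J(s_{+}u_{0})\le J(su_{0})$ for all $s\in[0,s_{-}]$, and test at $s_{\lambda}\sim\lambda$, where the quadratic-versus-linear balance (with the quadratic coefficient $a>0$ guaranteed by $\mu<\lambda_{1}$ when $p=0$, and by $\int_{\Omega}F(su_{0})=O(s^{p+2})=o(s^{2})$ when $p>0$) gives $\theta_{0}\le-c\lambda^{2}\le-C_{2}\lambda^{2p+8}$ for bounded $\lambda$. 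The only caveat, which concerns the lemma as stated rather than your argument, is that both of your final exponent conversions ($\frac{p+4}{p+3}\to\frac{p+3}{p+4}$ and $2\to 2p+8$) need $\lambda$ to range over a bounded set; this is harmless here, since the whole section operates under (\ref{Nehari1}), which by Proposition \ref{lambdastar} restricts $\lambda$ to a bounded interval, but it is worth stating explicitly as you did.
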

\begin{proof}
We prove the case of the lower bound.\\
Let $u \in \mathcal{M}$ then from the definition,
\begin{align*}
 J(u)=\frac{1}{2} \int_{\Omega} |\nabla^m u|^2 -\int_{\Omega}F(u) -\lambda \int_{\Omega}hu\\
    =\int_{\Omega} \left[ \frac{1}{2}f(u)u-F(u)\right] -\frac{\lambda}{2} \int_{\Omega}hu.
\end{align*}
We note that a simple calculation gives 
\begin{equation}
 F(t) \leq \frac{\mu |t|^p}{2}(e^{t^2}-1), \quad \mbox{ for all } s \in \mathbb{R}.                  \label{F-estimate}
\end{equation}
Using (\ref{F-estimate}) we get 
\begin{align}
 J(u) &\geq \frac{\mu}{2} \int_{\Omega} \left((u^2-1)e^{u^2}+1\right) - \frac{\lambda}{2} \int_{\Omega}hu    \notag \\
      &\geq \frac{c\mu}{2} \int_{\Omega} |u|^{p+4} -\frac{\lambda}{2}\int_{\Omega} hu,        \label{lower-bound-lemma}
\end{align}
since $(s^2-1)e^{s^2}+1 \geq cs^4$ for some $c>0$, for all $s \in \mathbb{R}$. By an application of Holder inequality we get 
\begin{equation}
 \int_{\Omega} hu \leq |\Omega|^{\frac{p+2}{2(p+4)}} \|u\|_{L^{p+4}(\Omega)}.          \label{lower-bound-holder}
\end{equation}
From (\ref{lower-bound-lemma}) and (\ref{lower-bound-holder}) we get,
\begin{equation}
 J(u) \geq \frac{c \mu}{2} \|u\|^{p+4}_{L^{p+4}(\Omega)} - \left( \frac{\lambda |\Omega|^{\frac{p+2}{2(p+4)}}}{2}\right)\|u\|_{L^{p+4}(\Omega)}.   \label{lower-bound}
\end{equation}
By considering the global minimum of the function 
\[ \omega(x)= \left(\frac{c \mu}{2}\right) x^{p+4} - \left( \frac{\lambda |\Omega|^{\frac{p+2}{2(p+4)}}}{2}\right) x,\] 
It can be shown that \[J(u) \geq -C_1 \lambda^{\frac{p+4}{p+3}}.\]
      In a similar fashion we can prove the upper bound for $J$.
\end{proof}

As a consequence of Lemma \ref{Null Set} we have:
\begin{lem} \label{Implicit-Inner}
Let $\lambda$ and $h$ satisfy (\ref{Nehari1}). Given $u \in \mathcal{M} \setminus \{0\}$ there exists $\delta >0$ and a differentiable function $s:\{w \in W^{m,2}_{\mathcal{N}}(\Om): \|w\|_{W^{m,2}_{\mathcal{N}}(\Om)} < \delta \}\to \mathbb{R}$, with $$s(0)=1, s(w)(u-w) \in \mathcal{M}, \hspace{1.5mm}\forall \quad \|w\|_{W^{m,2}_{\mathcal{N}}(\Om)} < \delta$$ and 
\begin{equation}
\langle s'(0), v \rangle= \frac{2 \int_{\Om} \nabla^m u \cdot \nabla^m v - \int_{\Om} (f'(u)u+f(u))v - \lambda \int_{\Om}hv}{\int_{\Om} |\nabla^m u |^2 - \int_{\Om}f'(u)u^2}                        \label{Nehari4}
\end{equation}                           
\end{lem}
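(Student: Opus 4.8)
The plan is to obtain $s(\cdot)$ as the solution of the Nehari constraint via the implicit function theorem, linearized at the base point $(s,w)=(1,0)$. First I would introduce the auxiliary scalar map $F:\mathbb{R}\times W^{m,2}_{\mathcal{N}}(\Om)\to\mathbb{R}$ given by $F(s,w)=\langle J'(s(u-w)),\,s(u-w)\rangle$, and, setting $z=u-w$, rewrite it as
\[ F(s,w)=s^2\int_{\Om}|\nabla^m z|^2-\int_{\Om}f(sz)\,sz-\lambda s\int_{\Om}hz. \]
Since $u\in\mathcal{M}$ one has $F(1,0)=\langle J'(u),u\rangle=0$, and $F(s(w),w)=0$ is by definition equivalent to $s(w)(u-w)\in\mathcal{M}$; thus the entire lemma reduces to solving $F(s,w)=0$ for $s$ near $1$ as $w$ ranges over a small ball around $0$.

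Next I would verify the transversality condition $\partial_s F(1,0)\neq 0$. A direct differentiation in $s$ gives
\[ \partial_s F(1,0)=2\int_{\Om}|\nabla^m u|^2-\int_{\Om}\big(f'(u)u^2+f(u)u\big)-\lambda\int_{\Om}hu, \]
and inserting the membership identity $\int_{\Om}|\nabla^m u|^2=\int_{\Om}f(u)u+\lambda\int_{\Om}hu$, valid because $u\in\mathcal{M}$, collapses this to
\[ \partial_s F(1,0)=\int_{\Om}|\nabla^m u|^2-\int_{\Om}f'(u)u^2=\langle J''(u)u,u\rangle. \]
Since $\lambda$ satisfies (\ref{Nehari1}), Lemma \ref{Null Set} gives $\mathcal{M}^0=\{0\}$, so the nonzero point $u$ does not lie in $\mathcal{M}^0$ and the quantity above is nonzero. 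The implicit function theorem then furnishes $\delta>0$ and a $C^1$ map $w\mapsto s(w)$ on $\{\|w\|_{W^{m,2}_{\mathcal{N}}(\Om)}<\delta\}$ with $s(0)=1$ and $F(s(w),w)=0$, that is $s(w)(u-w)\in\mathcal{M}$.

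To obtain (\ref{Nehari4}) I would differentiate the identity $F(s(w),w)=0$ implicitly at $w=0$, so that $\langle s'(0),v\rangle=-\,\partial_w F(1,0)[v]\big/\partial_s F(1,0)$. Computing the Gateaux derivative of $F$ in $w$ along the direction $v$ (the map $w\mapsto u-w$ contributing a factor $-1$) yields
\[ \partial_w F(1,0)[v]=-2\int_{\Om}\nabla^m u\cdot\nabla^m v+\int_{\Om}\big(f'(u)u+f(u)\big)v+\lambda\int_{\Om}hv, \]
and dividing by $\partial_s F(1,0)=\int_{\Om}|\nabla^m u|^2-\int_{\Om}f'(u)u^2$ reproduces exactly the claimed formula (\ref{Nehari4}).

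The main obstacle is the hypothesis of the implicit function theorem itself, namely that $F$ is of class $C^1$ in a neighbourhood of $(1,0)$; this is where the exponential nonlinearity $f(u)=\mu|u|^pue^{u^2}$ makes the argument nontrivial and forces the use of the sharp Adams--Moser--Trudinger inequality. Concretely, I would have to show that the Nemytskii operators $z\mapsto f(sz)z$ and $z\mapsto f'(sz)z^2$ send $W^{m,2}_{\mathcal{N}}(\Om)$ continuously into $L^1(\Om)$ with continuous, indeed $C^1$, dependence on $s$ and $z$. This follows by combining the continuous embedding $W^{m,2}_{\mathcal{N}}(\Om)\hookrightarrow L^q(\Om)$ for every finite $q$ with the exponential integrability supplied by Theorem \ref{Tarsi_Thm}: for $z$ in a fixed bounded set one bounds $e^{(sz)^2}$ in every $L^r(\Om)$ by keeping the relevant exponent strictly below the critical constant $\beta_{n,m}$, and then absorbs the polynomial factors by H\"older's inequality. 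Granting this regularity, every term-by-term differentiation performed above is justified and the proof is complete.
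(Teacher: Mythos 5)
Your proposal is correct and follows essentially the same route as the paper: both apply the implicit function theorem to a scalar Nehari-constraint function $G(s,w)=0$ at the point $(1,0)$, invoke Lemma \ref{Null Set} to get the nondegeneracy $\int_{\Om}|\nabla^m u|^2-\int_{\Om}f'(u)u^2\neq 0$, and differentiate the identity implicitly to obtain exactly (\ref{Nehari4}). The only cosmetic difference is that your auxiliary function $F(s,w)=\langle J'(s(u-w)),s(u-w)\rangle$ is $s$ times the paper's $G(s,w)$, which is why your transversality computation needs the extra insertion of the Nehari identity $\int_{\Om}|\nabla^m u|^2=\int_{\Om}f(u)u+\lambda\int_{\Om}hu$ before collapsing to the same denominator.
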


\begin{proof}
We define the function  $G: \mathbb{R} \times W^{m,2}_{\mathcal{N}}(\Om) \to \mathbb{R}$ by, 
$$G(s,w) = s \int_{\Om} |\nabla^m (u-w)|^2 - \int_{\Om} f(s(u-w))(u-w) - \lambda \int_{\Om} h(u-w).$$
Then $ G \in C^1( \mathbb{R} \times W^{m,2}_{\mathcal{N}}(\Om); \mathbb{R})$ and since $u \in \mathcal{M}$ it implies $G(1,0)= \int_{\Om} |\nabla^m u|^2 - \int_{\Om}f(u)u - \lambda \int_{\Om} hu = 0.$ Also $G_s(1,0) \neq 0$, indeed $G_s(1,0)= \int_{\Om} |\nabla^m u|^2 - \int_{\Om} f'(u)u^2 \neq 0$ thanks to Lemma \ref{Null Set}. Then by the Implicit Function Theorem, there exists $\delta >0, s:\{w \in W^{m,2}_{\mathcal{N}}(\Om): \|w\| < \delta\} \to \mathbb{R}$ of class $C^1$ that satisfies: 
\begin{align*}
G(s(w),w) &= 0 \text{ for all } w \in W^{m,2}_{\mathcal{N}}(\Om), \|w\|_{W^{m,2}_{\mathcal{N}}(\Om)} < \delta,\\
s(0) &= 1.
\end{align*}
Also 
\begin{align*}
0 &= s(w)G(s(w),w)\\
  &= \int_{\Om} (s(w) |\nabla^m (u-w)|)^2 - \int_{\Om}f(s(w)(u-w)(s(w)(u-w)) -\lambda \int_{\Om}h(s(w)(u-w)),
\end{align*} 
that is $s(w)(u-w) \in \mathcal{M}$ for all $w \in W^{m,2}_{\mathcal{N}}(\Om)$ with $\|w\| < \delta$.
Now if we differentiate the identity $G(s(w),w)=0$ with respect to $w$, we get 
\[ 0 = \langle G_s(s(w),w) + G_w(s(w),w), v \rangle \text{ for all } v \in W^{m,2}_{\mathcal{N}}(\Om). \] 
Putting $w=0$ in the above identity 
\[ 0 = \langle G_s(1,0) s'(0) + G_w(1,0), v \rangle = G_s(1,0) \langle s'(0), v \rangle + \langle G_w(1,0), v \rangle \] and we deduce from above 
\begin{align*}
\langle s'(0), v \rangle &= -\frac{\langle G_w(1,0),v \rangle}{G_s(1,0)}\\
                         &=  \frac{2 \int_{\Om} \nabla^m u \cdot \nabla^m v - \int_{\Om} (f'(u)u+f(u))v - \lambda \int_{\Om}hv}{\int_{\Om} |\nabla^m u|^2 - \int_{\Om}f'(u)u^2}.
  \end{align*}
\end{proof}

\section{Local Minimum of \texorpdfstring{$J$}  \text{ in} \texorpdfstring{$W^{m,2}_{\mathcal{N}}(\Om)$}.}
We are now in a situation to prove the existence of a minimizer for $J$ and hence we guarantee the existence of first solution. 
\par Since $\mathcal{M}$ is a closed set of $W^{m,2}_{\mathcal{N}}(\Om)$, hence a complete metric space. Now $J$ is bounded below on $\mathcal{M}$. By the Ekeland's Variational Principle there exists a sequence $\{u_n\} \subset \mathcal{M} \setminus \{0\}$ satisfying:
\begin{equation}
J(u_n) < \theta_0 + \frac{1}{n}, \hspace{2mm} J(v) \geq J(u_n)- \frac{1}{n}\|v-u_n\|_{W^{m,2}_{\mathcal{N}}(\Om)} \hspace{1mm} \forall v \in \mathcal{M}                  \label{Ekeland}
\end{equation}
\begin{proposition}
Let $\lambda$ and $h$ satisfy (\ref{Nehari1}). Then $$\lim _{n \to \infty}\|J'(u_n)\|_{(W^{m,2}_{\mathcal{N}}(\Om))^{-1}} = 0.$$ 
\end{proposition}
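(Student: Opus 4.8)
The plan is to convert the ``almost critical on the manifold'' information in (\ref{Ekeland}) into an almost critical point of $J$ on the whole space $W^{m,2}_{\mathcal N}(\Om)$, by deforming $u_n$ tangentially \emph{inside} $\mathcal M$ with the implicit function $s_n$ supplied by Lemma \ref{Implicit-Inner}, and then letting the deformation parameter tend to zero. Before doing this I would record two a priori facts about the sequence. Since $u_n\in\mathcal M$, the Nehari identity gives $\int_\Om|\nabla^m u_n|^2=\int_\Om f(u_n)u_n+\lambda\int_\Om hu_n$, whence
\[
J(u_n)=\int_\Om\Big(\tfrac12 f(u_n)u_n-F(u_n)\Big)-\tfrac\lambda2\int_\Om hu_n .
\]
Combined with a pointwise bound of the type used in Lemma \ref{upper-lower} and with $J(u_n)\to\theta_0$, this yields a uniform bound on $\int_\Om f(u_n)u_n=\mu\int_\Om|u_n|^{p+2}e^{u_n^2}$, hence (via the same Nehari identity and Hölder's inequality on $\int_\Om hu_n$) a uniform bound $\|u_n\|_{W^{m,2}_{\mathcal N}(\Om)}\le C$. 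Moreover $\theta_0<0$ prevents $\int_\Om f(u_n)u_n$ from vanishing, so $\{u_n\}$ stays bounded away from $0$.

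Next I would fix $v$ with $\|v\|_{W^{m,2}_{\mathcal N}(\Om)}=1$ and, for small $\delta>0$, set $z_\delta=s_n(\delta v)\,(u_n-\delta v)$, where $s_n$ is the $C^1$ function from Lemma \ref{Implicit-Inner} applied at $u_n$ (legitimate since $\mathcal M^0=\{0\}$ by Lemma \ref{Null Set}). Then $z_\delta\in\mathcal M$, so (\ref{Ekeland}) applies with test point $z_\delta$. Writing $s_n(\delta v)=1+\delta\langle s_n'(0),v\rangle+o(\delta)$ gives $z_\delta-u_n=\delta\big(\langle s_n'(0),v\rangle u_n-v\big)+o(\delta)$, and a first order Taylor expansion of $J$ produces
\[
J(z_\delta)-J(u_n)=\delta\,\langle s_n'(0),v\rangle\,\langle J'(u_n),u_n\rangle-\delta\,\langle J'(u_n),v\rangle+o(\delta).
\]
The first term vanishes because $u_n\in\mathcal M$. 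Feeding this into (\ref{Ekeland}), dividing by $\delta>0$, and letting $\delta\to0^+$ yields $\langle J'(u_n),v\rangle\le \frac{C}{n}\big(1+\|s_n'(0)\|\big)$, where the constant comes from $\|z_\delta-u_n\|/\delta\to\|\langle s_n'(0),v\rangle u_n-v\|$ together with $\|u_n\|\le C$. Replacing $v$ by $-v$ and taking the supremum over the unit sphere gives $\|J'(u_n)\|\le \frac{C}{n}\big(1+\|s_n'(0)\|\big)$, so everything reduces to a uniform bound on $\|s_n'(0)\|$.

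The hard part is precisely this uniform bound, and it rests on keeping $\{u_n\}$ away from $\mathcal M^0$. Reading off (\ref{Nehari4}), the numerator is a bounded linear functional of $v$ whose norm is controlled by $\|u_n\|$ and by $\|f(u_n)\|_{L^2},\|f'(u_n)u_n\|_{L^2}$, all uniformly bounded once $\{u_n\}$ is bounded, thanks to the Adams--Tarsi embedding of Theorem \ref{Tarsi_Thm}. The delicate quantity is the denominator $D_n:=\int_\Om|\nabla^m u_n|^2-\int_\Om f'(u_n)u_n^2=\langle J''(u_n)u_n,u_n\rangle$. Using the Nehari identity on $\mathcal M$ and the algebraic relation $f'(s)s-f(s)=\mu(p+2s^2)|s|^p s e^{s^2}$, one obtains the key identity
\[
D_n=-\Big(\mu\int_\Om(p+2u_n^2)|u_n|^{p+2}e^{u_n^2}-\lambda\int_\Om hu_n\Big),
\]
so $D_n\to0$ along a subsequence would force the parenthesis to $0$. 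However, since $\theta_0<0$ also keeps $\int_\Om f'(u_n)u_n^2$ bounded below away from $0$, a small $|D_n|$ forces $u_n\in\Lambda$ for large $n$, and then (\ref{Nehari1}) bounds that parenthesis below by a fixed positive constant --- a contradiction. Hence $|D_n|\ge c_0>0$ uniformly, giving $\|s_n'(0)\|\le C$ and therefore $\|J'(u_n)\|_{(W^{m,2}_{\mathcal N}(\Om))^{-1}}\to0$.
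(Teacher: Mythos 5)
Your proposal is correct and follows essentially the same route as the paper: an Ekeland minimizing sequence on $\mathcal{M}$, a deformation inside $\mathcal{M}$ via the implicit function of Lemma \ref{Implicit-Inner}, a first-order Taylor expansion that reduces the claim to a uniform bound on $s_n'(0)$, and then exactly the paper's ``Claim 2'' argument --- the identity $\int_{\Om}|\nabla^m u_n|^2-\int_{\Om}f'(u_n)u_n^2=-\mu\int_{\Om}(p+2u_n^2)|u_n|^{p+2}e^{u_n^2}+\lambda\int_{\Om}hu_n$ together with membership in $\Lambda$ and condition (\ref{Nehari1}) --- to keep the denominator of (\ref{Nehari4}) bounded away from zero. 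The only differences are cosmetic: you perturb in an arbitrary unit direction $v$ and take a supremum (and establish boundedness of $\|u_n\|_{W^{m,2}_{\mathcal{N}}(\Om)}$ up front), whereas the paper perturbs in the single direction $J'(u_n)/\|J'(u_n)\|$ to read off $\|J'(u_n)\|$ directly.
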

\begin{proof}
We proceed in a few steps. With the help of Lemma \ref{upper-lower} we've $\varliminf_{n \to \infty}\|u_n\|_{W^{m,2}_{\mathcal{N}}}>0$.
\textbf{Claim 1:} $\varliminf_{n \to \infty} \int_{\Omega} (p+2u_n^2)|u_n|^{p+2}e{u_n^2} >0$.\\
If possible let's assume that for a subsequence of $\{u_n\}$, which is still denoted by $\{u_n\}$, we have 
\begin{equation}
 \lim_{n \to \infty} (p+2u_n^2)|u_n|^{p+2}e^{u_n^2} \to 0 \text{    as   } n \to \infty      \label{lim-dual}
\end{equation}
Here we note that $u_n \to 0$ in $L^q(\Omega)$ for all $q \in [1,\infty)$ using (\ref{lim-dual}), and if $p>0$, 
\[\int_{\Omega} f(u_n)u_n = \mu \int_{\Omega} |u_n|^{p+2} e^{u_n^2} \to 0 \text{  as } n \to \infty.\] 
Therefore we have $\int_{\Omega}f(u_n)u_n \to 0,$  $\int_{\Omega} hu_n  \to 0$ as $n \to \infty$. Which imply that $\|u_n\|_{W^{m,2}_{\mathcal{N}}} \to 0$ as $n \to \infty$ because $\{u_n\} \subset \mathcal{M}$ hence a contradiction to the fact that $\varliminf_{n \to \infty}\|u_n\|_{W^{m,2}_{\mathcal{N}}}>0$. Similar argument for $p=0$.\\
\textbf{Claim 2:} $\varliminf_{n \to \infty} \{ |\int_{\Omega}|\nabla^m u|^2 - \int_{\Omega} f'(u_n)u_n^2| >0\}.$\\
Let the claim doesn't hold. Then for a subsequence $\{u_n\}$ we have  
\[ \int_{\Omega} |\nabla^m u|^2 - \int_{\Omega} f'(u_n)u_n^2 = o_n(1).\] 
From this and the fact $\varliminf_{n \to \infty}\|u_n\|_{W^{m,2}_{\mathcal{N}}}>0$ we deduce that,
\[\varliminf_{n \to \infty}f'(u_n)u_n^2 >0.\]
Therefore we have $u_n \in \Lambda \setminus \{0\}$ for large $n$. Since $\{u_n\} \subset \mathcal{M}$ we get
\begin{align*}
 o_n(1) &= \lambda \int_{\Omega} hu_n + \int_{\Omega} [f(u_n)-f'(u_n)u_n]u_n\\
          &= -\mu \int_{\Omega} (p+2u_n^2)|u_n|^{p+2}e^{u_n^2} + \lambda \int_{\Omega} hu_n,
\end{align*}
which contradicts (\ref{Nehari1}). This completes the proof of the claim.\\
Now we proof the theorem. Let's assume $\|J'(u_n)\|_{(W^{m,2}_{\mathcal{N}})^{-1}}  >0$ for all large $n$ (otherwise obvious). Now we define $u=u_n \in \mathcal{M}$ and $w=\delta \frac{J'(u_n)}{\|J'(u_n)\|}$ (by Riesz representation theorem, we identify $J'(u_n)$ as an element in $W^{m,2}_{\mathcal{N}}(\Omega)$ still denote $J'(u_n)$) for $\delta >0$ small. Therefore we can apply Lemma \ref{Implicit-Inner} for $w$ small we get $s_n(\delta):= s\left[ \delta \frac{J'(u_n)}{\|J'(u_n)\|}\right] >0$ such that \[ w_{\delta}= s_n(\delta) \left[ u_n - \delta \frac{J'(u_n)}{\|J'(u_n)\|}\right] \in \mathcal{M}.\]
Now from (\ref{Ekeland}) and a Taylor expansion we have:
\begin{align*}
 \frac{1}{n} \|w_{\delta} - u_n\| &\geq J(u_n) - J(w_{\delta})\\
                       &=(1-s_n(\delta)) \langle J'(w_{\delta}),u_n \rangle + \delta s_n(\delta) \left\langle J'(w_{\delta}), \frac{J'(u_n)}{\|J'(u_n)\|} \right\rangle +o(\delta)
                       \end{align*}
Dividing by $\delta>0$ and taking limit as $\delta \to 0$ we get:
\[\frac{1}{n}(1+|s_n'(0)| \|u_n\|) \leq -s_n(0) \langle J'(u_n),u_n\rangle +\|J'(u_n)\|=\|J'(u_n)\|.\] 
Hence \[ \|J'(u_n)\| \leq \frac{1}{n}(1+s_n'(0)|\|u_n\|).\]
We complete the proof by using, $|s_n'(0)|$ is uniformly bounded on $n$ by (\ref{Nehari4}) and using the Claim 2. 
\end{proof}
\begin{theorem}
Let $\lambda,h$ satisfy (\ref{Nehari1}). Then there exists a nonnegative function $ u_0 \in \mathcal{M}^+$ such that $J(u_0)= \inf_{u \in \mathcal{M}\setminus \{0\}} J(u)$. Moreover, $u_0$ is a local minimum for $J$ in $W^{m,2}_{\mathcal{N}}(\Om)$.      \label{1st solution}
\end{theorem}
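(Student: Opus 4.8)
The plan is to realize the minimizer as the limit of the Ekeland sequence $\{u_n\}\subset\mathcal M\setminus\{0\}$ furnished by (\ref{Ekeland}), for which the previous Proposition already gives $\|J'(u_n)\|\to0$, and then to locate it in $\mathcal M^+$ and upgrade it to a local minimum on the whole space. \textbf{First I would prove boundedness} of $\{u_n\}$ in $W^{m,2}_{\mathcal N}(\Om)$. Since $J(u_n)\to\theta_0$ is bounded, the lower bound (\ref{lower-bound}) forces $\|u_n\|_{L^{p+4}(\Om)}$, hence $\|u_n\|_{L^2(\Om)}$, to stay bounded, so $\lambda\int_\Om hu_n$ is bounded. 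Writing, for $u\in\mathcal M$,
\[J(u)=\int_\Om\Big(\tfrac12 f(u)u-F(u)\Big)-\tfrac\lambda2\int_\Om hu,\]
and using the pointwise comparison $f(t)t\le C\big(\tfrac12 f(t)t-F(t)\big)+C'$ (valid because $\tfrac12 f(t)t-F(t)\sim\tfrac12 f(t)t$ as $|t|\to\infty$), I obtain $\int_\Om f(u_n)u_n$ bounded; the Nehari identity $\int_\Om|\nabla^m u_n|^2=\int_\Om f(u_n)u_n+\lambda\int_\Om hu_n$ then bounds $\|u_n\|^2_{W^{m,2}_{\mathcal N}(\Om)}$.

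\textbf{Next comes the compactness of the nonlinearity and the weak solution.} Passing to a subsequence, $u_n\rightharpoonup u_0$ in $W^{m,2}_{\mathcal N}(\Om)$, $u_n\to u_0$ in $L^q(\Om)$ for every $q<\infty$ and a.e. For $\lambda$ small the level $\theta_0$ is a small negative number (Lemma \ref{upper-lower}) and the minimizing sequence remains in the regime $\int_\Om|\nabla^m u_n|^2<\beta_{2m,m}$; by Tarsi's inequality (Theorem \ref{Tarsi_Thm}) this makes $\{e^{u_n^2}\}$ bounded in some $L^q(\Om)$ with $q>1$, hence uniformly integrable, so $f(u_n)\to f(u_0)$ in $L^1(\Om)$ and $\int_\Om F(u_n)\to\int_\Om F(u_0)$. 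Using $\|J'(u_n)\|\to0$ with these convergences I pass to the limit in $\langle J'(u_n),\varphi\rangle$ to get $\langle J'(u_0),\varphi\rangle=0$ for all $\varphi$, i.e. $u_0$ solves $\D u_0=f(u_0)+\lambda h$ weakly; nonnegativity is then read off from the positivity-preserving property of $\D$ under the Navier condition (it factorizes into a chain of Dirichlet Laplacians), so one may take $u_0\ge0$. If $u_0=0$ then $\int_\Om f(u_n)u_n\to0$ and $\int_\Om hu_n\to0$, and the Nehari identity would give $\|u_n\|\to0$, contradicting $\varliminf_n\|u_n\|_{W^{m,2}_{\mathcal N}(\Om)}>0$; hence $u_0\neq0$.

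\textbf{Then I identify the energy and locate $u_0$ in $\mathcal M^+$.} By weak lower semicontinuity of $u\mapsto\int_\Om|\nabla^m u|^2$ together with the convergences above, $J(u_0)\le\liminf J(u_n)=\theta_0$; since $u_0\in\mathcal M$ we also have $J(u_0)\ge\theta_0$, so $J(u_0)=\theta_0$. As $\mathcal M^0=\{0\}$ (Lemma \ref{Null Set}) and $u_0\neq0$, necessarily $u_0\in\mathcal M^+\cup\mathcal M^-$. To exclude $\mathcal M^-$ I examine the fiber $\rho_{u_0}(s)=J(su_0)$, at which $s=1$ is a Nehari point: if $\int_\Om hu_0\le0$ then $s=1$ is the maximum-type point and $J(u_0)=\rho_{u_0}(1)\ge\rho_{u_0}(0)=0>\theta_0$, impossible; if $\int_\Om hu_0>0$ then $s_+(u_0)u_0\in\mathcal M^+$ satisfies $J(s_+(u_0)u_0)\le J(u_0)=\theta_0$ by Lemma \ref{maxima-minima}, so it is again a minimizer and we relabel it $u_0$. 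Thus $u_0\in\mathcal M^+$.

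\textbf{Finally, the local-minimum property.} Since $u_0\in\mathcal M^+\subset\mathcal P$, $\int_\Om hu_0>0$, hence $\int_\Om hv>0$ for $v$ near $u_0$, and by continuity of $s_+$ (as in the Corollary) $s_+(v)\to s_+(u_0)=1$. If $u_0$ were not a local minimum of $J$ on $W^{m,2}_{\mathcal N}(\Om)$, there would be $v_j\to u_0$ with $J(v_j)<\theta_0$; for large $j$ one has $1\in[0,s_-(v_j)]$, so Lemma \ref{maxima-minima} gives $J(s_+(v_j)v_j)\le J(v_j)<\theta_0$, while $s_+(v_j)v_j\in\mathcal M$ forces $J(s_+(v_j)v_j)\ge\theta_0$, a contradiction. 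The main obstacle is the compactness step: controlling the critical exponential term along the minimizing sequence. The whole argument hinges on $\theta_0$ lying below the Adams--Moser--Tarsi threshold, which is guaranteed by taking $\lambda$ small, so that no concentration occurs and the energy passes to the weak limit; the remaining steps are the Nehari-manifold bookkeeping of Tarantello's method transplanted to $W^{m,2}_{\mathcal N}(\Om)$.
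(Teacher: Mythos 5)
Your skeleton (Ekeland sequence, boundedness, weak limit, placement in $\mathcal{M}^+$, fiber-map argument for the local minimum) matches the paper's, and several steps are sound: your boundedness argument via the pointwise comparison $f(t)t\le C\bigl(\tfrac12 f(t)t-F(t)\bigr)+C'$ is, if anything, cleaner than the paper's case distinction, and your $\mathcal{M}^-$-exclusion and local-minimum bookkeeping are fine. The genuine gap is in your compactness step. You hang the convergences $f(u_n)\to f(u_0)$, $\int_\Om F(u_n)\to\int_\Om F(u_0)$ on the assertion that the minimizing sequence ``remains in the regime $\int_\Om|\nabla^m u_n|^2<\beta_{2m,m}$,'' justified only by ``$\theta_0$ is a small negative number.'' This is not a one-line consequence of Lemma \ref{upper-lower}: smallness of $|\theta_0|$ controls $\|u_n\|_{L^{p+4}(\Om)}$, but to bound $\|u_n\|^2_{W^{m,2}_{\mathcal{N}}(\Om)}=\int_\Om f(u_n)u_n+\lambda\int_\Om hu_n$ you must control the critical term $\int_\Om|u_n|^{p+2}e^{u_n^2}$, which is precisely the quantity that can concentrate. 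One can in fact push such an estimate through (split $\{|u_n|\ge\sqrt2\}$, where $\tfrac12 f(t)t-F(t)\ge\tfrac14 f(t)t$, against $\{|u_n|<\sqrt2\}$), but the resulting bound is of the form $\|u_n\|^2\le C(\mu,|\Om|)\,\lambda^{(p+2)/(p+3)}$, which lies below $\beta_{2m,m}$ only for $\lambda$ small in a sense that is \emph{not} implied by hypothesis (\ref{Nehari1}); so, as written, you prove the theorem for a possibly smaller range of $\lambda$ than the statement requires. The paper avoids any threshold at this stage: from the Nehari identity and the uniform norm bound, $\{f(u_n)u_n\}$ is bounded in $L^1(\Om)$, and Vitali's convergence theorem (splitting $\{|u_n|\ge K\}$ against $\{|u_n|\le K\}$, as in Lemma 8.3 of \cite{Prashanth}) already gives $\int_\Om f(u_n)\phi\to\int_\Om f(u_0)\phi$, so $u_0$ solves $(P)$; the energy inequality $J(u_0)\le\theta_0$ then follows from Fatou's lemma applied to the nonnegative integrand $\tfrac1{p+2}f(u_n)u_n-F(u_n)$ together with weak lower semicontinuity of the norm. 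The Adams--Tarsi constant is needed only for the second, mountain-pass solution.

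A second, smaller gap is nonnegativity, which is part of the statement. Since $f(u_0)=\mu|u_0|^p u_0e^{u_0^2}$ has the sign of $u_0$, the right-hand side $f(u_0)+\lambda h$ need not be nonnegative where $u_0<0$, so the positivity-preserving property of $\D$ under Navier conditions does not apply directly, and ``one may take $u_0\ge0$'' does not follow. The paper handles this (Step 5 of its proof) by a replacement argument: if $u_0$ changes sign, it passes to the element of $\mathcal{M}^+$ on the ray through $|u_0|$, using $h\ge0$ to compare $J(|u_0|)$ with $J(u_0)$ and the fiber analysis of Lemma \ref{maxima-minima} to see that the new function is again a minimizer, and only then invokes the maximum principle. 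You need some step of this kind; concentration-compactness and positivity-preservation alone do not deliver a nonnegative minimizer.
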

\begin{proof}
Let $\{u_n\}$ be a sequence which minimizes $J$ on $\mathcal{M} \setminus \{0\}$ as in (\ref{Ekeland}). \\
\noindent\textbf{Step 1:} $\liminf_{n \to \infty} \int_{\Om} hu_n >0$ and hence $u_n \in \mathcal{M}^+$. Indeed $u_n \in \mathcal{M}$ and making some suitable adjustments
 \begin{align}
J(u_n) &= \frac{p}{2(p+2)} \int_{\Om} |\nabla^m u|^2 + \int_{\Om} \left( \frac{1}{p+2} f(u_n)u_n - F(u_n)\right)   \notag\\
        & \quad  -\lambda \frac{p+1}{p+2} \int_{\Om} hu_n < -C \lambda^{2p+8}.    \label{liminf}
\end{align}
Thanks to Lemma \ref{upper-lower} there exists $C>0$.  Now we note that $F(t) < \frac{1}{p+2}f(t)t$ for all $ t \in \mathbb{R}$. Therefore we've from (\ref{liminf}), to make the inequality consistent with sign that \[ \liminf _{n \to \infty} \int_{\Om}hu_n >0.\]

\noindent\textbf{Step 2:} $\limsup _{n \to \infty} \|u\|_{W^{m,2}_{\mathcal{N}}(\Om)} < \infty$. \\
\textit{Case 1.} If $p>0$ then by the means of Sobolev embedding we derive boundedness of  $\{u_n\}$ in $W^{m,2}_{\mathcal{N}}(\Om)$. Using the fact from (\ref{liminf}) 
$$ \int_{\Om}|\nabla^m u|^2 \leq \lambda \int_{\Om} hu_n.$$ 
\textit{Case 2.} If $p=0$ by using the fact that $\frac{1}{2}f(t)t - F(t) \geq Ct^4$ for all $t \in \mathbb{R}$ and for some $C>0$ we deduce that $\{u_n\}$ is a bounded sequence in $L^2(\Om)$. And this gives that $\{F(u_n)\}$ is a bounded sequence in $L^1(\Om)$ using (\ref{liminf}) and hence $\{u_n\}$ is a bounded sequence in $W^{m,2}_{\mathcal{N}}(\Om)$. 

\noindent \textbf{Step 3:} Existence of $u_0 \in \mathcal{M}^+$.\\
From the previous step up to a subsequence, $u_n \rightharpoonup u_0$ in $W^{m,2}_{\mathcal{N}}(\Om)$. Now from the Proposition 2.2 we note that $\{f(u_n)u_n\}$ is a bounded sequence in $L^1(\Om)$. Therefore from Vitali's convergence theorem (for details see Lemma 8.3 in \cite{Prashanth}), we get that $$\int_{\Om}f(u_n) \phi \to \int_{\Om}f(u_0)\phi, \text{ for all } \phi \in W^{m,2}_{\mathcal{N}}(\Om).$$ Hence $u_0$ will solve $(P)$, in particular $u_0 \in \mathcal{M}$. Here we note that $u_0 \neq 0$ as $h \neq 0$ that is $u_0 \in \mathcal{M} \setminus \{0\}$. We see that $ \theta_0 \leq J(u_0)$. From (\ref{liminf}) we get by using Fatou's Lemma that $\theta_0 = \liminf_{n \to \infty} J(u_n) \geq J(u_0)$. Therefore $u_0$ minimizes $J$ on $\mathcal{M} \setminus \{0\}$. Now we have to show $u_0 \in \mathcal{M}^+$. From the existence of $s_-(u_0)$ and $s_+(u_0)$ in Lemma \ref{maxima-minima} and using the fact $J(s_+(u_0)u_0) < J(s_-(u_0)u_0)$ we get $u_0 \in \mathcal{M}^+$. \\
\noindent \textbf{Step 4:} $u_0$ is a local minimum for for $J$ in $W^{m,2}_{\mathcal{N}}(\Om)$. 
\par We see that $s_+(u_0) =1$ because $u_0 \in \mathcal{M}^+$ from Step 3. Also we have from the (\ref{maxima-minima}) we have 
\begin{equation*}
s_+(u_0)=1 <s_*(u_0)
\end{equation*}
Now by the continuity of $s_*(u_0)$, for sufficiently small $\delta>0$  
\begin{equation}
1 < s_*(u_0-w), \hspace{1.5mm}\forall w \in W^{m,2}_{\mathcal{N}}(\Om), \|w\|_{W^{m,2}_{\mathcal{N}}(\Om)} < \delta.    \label{minimizer}
\end{equation}
Now by the Lemma \ref{Implicit-Inner} for $\delta>0$ small enough if necessary, let $s: \{w \in W^{m,2}_{\mathcal{N}}(\Om): \|w\| < \delta\} \to \mathbb{R}$ such that $s(w)(u_0-w) \in \mathcal{M}$ and $s(0)=1$. 
\noindent Whenever $s(w) \to 1 $ when $\|w\| \to 0$, we can assume that 
\[ s(w) < s_*(u_0-w), \hspace{1.5mm} \forall w \in W^{m,2}_{\mathcal{N}}(\Om), \|w\| < \delta.\] Hence we get $s(w)(u_0-w) \in \mathcal{M}^+$ using the above inequality and Lemma \ref{maxima-minima}. Again by using the Lemma \ref{maxima-minima} we see,
\[J(s(u_0-w) \geq J(s(w)(u_0-w)) \geq J(u_0), \hspace{1.5mm} \forall s \in [0,s_*(u_0-w)].\]Hence from (\ref{minimizer}) we observe that $J(u_0-w) \geq J(u_0)$ for every $\|w\|_{W^{m,2}_{\mathcal{N}}(\Om)} <\delta$. This shows that $u_0$ is a local minimizer. \\
\noindent{\textbf{Step 5:}} A positive local minimum for $J$. If $u_0 \geq 0$ then we get the positivity by using the strong maximum principle. In case if $u_0 \ngeq 0$ then we consider $\tilde{u}_0 = s_+(u_0)|u_0|>0 \in \mathcal{M}^+$ and also from the definition $\rho_{u_0}(s)=\rho_{|u_0|}(s)$ for all $s>0$. Therefore we get $s_*(|u_0|)=s_*(u_0)$ and from the definition of $s_+$ we deduce $s_+(u_0) \leq s_+(|u_0|)$. Hence from Step 4, $s_+(|u_0|) \geq 1$. Therefore by Lemma \ref{maxima-minima} we get $J(\tilde{u}_0) \leq J(|u_0|)$. Now using the assumption $h \geq 0$ in $\Omega$, we have $J(|u_0|) \leq J(u_0)$ and which implies that $\tilde{u}_0$ minimizes $J$ on $\mathcal{M}\setminus \{0\}$. Hence by repeating the same argument as in Step 4 we get the desired result.
\end{proof}

\section{Existence of The Second Solution}
The existence of the second solution for $(P)$ depends on whether we can apply some version of Mountain Pass Lemma. We wish to look for a solution of the form $u_1 = v + u_0$ where $u_0$ is the local minimum for the functional (\ref{energy}). Then we see that $u_1$ will solve $(P)$ whenever $v$ solves the following equation:
 $$(P_1)\hspace{1cm}\left\{
\begin{array}{lllll}\left.
\begin{array}{lllll}
\D v &= f(v+u_0)-f(u_0)\\
\hspace{8mm} v &> 0 \end{array}\right\} \;\; \text{in} \;\Om, \\
v = \De v = 0=..=\De^{m-1}u  \hspace{15.5mm}\text{ on } \partial \Omega. \end{array}\right.
$$
We can write the above PDE as following
  $$(\tilde{P}) \hspace{1cm}\left\{
\begin{array}{lllll}\left.
\begin{array}{lllll}
\D2 v &= \tilde{f}(x,v)\\
\hspace{6mm} v &> 0 \end{array}\right\} \;\; \text{in} \;\Om, \\
v = \De v = 0=..=\De^{m-1}v \text{  on } \partial \Omega, \end{array}\right.
$$
by introducing the function $\tilde{f}: \Om \times \mathbb{R} \to \mathbb{R}$ and we define by 
\begin{align*}
\tilde{f}(x,s) &= f(s+u_0(x))-f(u_0(x)) \text{ if } s \geq 0,\\
               &= 0 \hspace{2.5cm}\text{   otherwise.}
  \end{align*}
The energy functional corresponding to $(\tilde{P})$ is $J_{u_0}: {W^{m,2}_{\mathcal{N}}(\Om)} \to \mathbb{R}$ defined by 
\[ J_ {u_0}(v) = \frac{1}{2}\int_{\Om}|\nabla^m v|^2 - \int_{\Om} \tilde{F}(x,v)dx,\] where $\tilde{F}(x,s) = \int_0^s \tilde{f}(x,t)dt$.
Now onwards, we denote $J_{u_0}$ by $J_0$. These type of functionals were studied by \cite{Zhao-Chang}, \cite{Peral}. We now state the Generalized Mountain Pass Lemma that was introduced by Ghoussoub-Preiss \cite{Preiss}. 
\begin{Def}
Let $H$ be a closed subspace of the Banach Space $W^{m,2}_{\mathcal{N}}(\Om)$. We say that a sequence $\{v_n\} \subset W^{m,2}_{\mathcal{N}}(\Om)$ is a Palais-Smale sequence for $J_0$ at the level $c$ around $H$ if:
\begin{itemize}
\item[(i)] $\lim_{n \to \infty} dist(v_n,H) =0$
\item[(ii)] $\lim_{n \to \infty} J_0(v_n) = c$
\item[(iii)] $\lim_{n \to \infty} \|J_0'(v_n)\|_{(W^{m,2}_{\mathcal{N}}(\Om))^{-1}} = 0.$
\end{itemize}
And we say such a sequence a $(PS)_{H,c}$ sequence.
\end{Def}
\begin{Rem}
In case $H =W^{m,2}_{\mathcal{N}}(\Om)$, the above definition coincides with the usual Palais-Smale sequence at the level $c$.
\end{Rem}
\begin{lem} \label{mountain}
Let $H \subset W^{m,2}_{\mathcal{N}}(\Om)$ be a closed set,$c \in \mathbb{R}$. Assume $\{v_n\} \subset W^{m,2}_{\mathcal{N}}(\Om)$ be a $(PS)_{H,c}$ sequence. Then (upto a subsequence), $v_n \rightharpoonup v_0$ in $W^{m,2}_{\mathcal{N}}(\Om)$, and 
\begin{equation}
\lim _{n \to \infty} \int_{\Om} \tilde{f}(x,v_n) = \int_{\Om} \tilde{f}(x,v_0), \hspace{1cm} \lim_{n \to \infty} \int_{\Om} \tilde{F}(x,v_n) = \int_{\Om} \tilde{F}(x,v_0).     \label{Vitali}
\end{equation}      
\end{lem}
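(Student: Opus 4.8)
The plan is to run the standard argument for exponential critical growth: extract a bounded, weakly convergent subsequence and then upgrade the weak convergence of $v_n$ to convergence of the nonlinear integrals by an equi-integrability (Vitali) argument. I note at the outset that the distance condition $\mathrm{dist}(v_n,H)\to 0$ is not needed for this lemma; only the bounds $J_0(v_n)\to c$ and $\|J_0'(v_n)\|\to 0$ enter.

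First I would show $\{v_n\}$ is bounded in $W^{m,2}_{\mathcal N}(\Om)$. Since $f(s)=\mu|s|^p s e^{s^2}$ grows faster than any power, the shifted nonlinearity is super-quadratic: for every $\theta>2$ there is $R_\theta$ with $\tilde F(x,s)\le \theta^{-1}s\,\tilde f(x,s)$ for $s\ge R_\theta$ (an Ambrosetti--Rabinowitz inequality with arbitrary exponent; here I use that the first solution $u_0$ of Theorem \ref{1st solution} is bounded by elliptic regularity, so $R_\theta$ is uniform in $x$). Writing $J_0(v_n)-\theta^{-1}\langle J_0'(v_n),v_n\rangle=(\tfrac12-\tfrac1\theta)\|\nabla^m v_n\|_2^2+\int_\Om(\theta^{-1}v_n\tilde f(x,v_n)-\tilde F(x,v_n))$ and bounding the last integral below by a constant, I get $(\tfrac12-\tfrac1\theta)\|\nabla^m v_n\|_2^2\le c+C+o(\|v_n\|)$, forcing $\|v_n\|_{W^{m,2}_{\mathcal N}}$ to be bounded. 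Passing to a subsequence, $v_n\rightharpoonup v_0$; as $n=2m$ the embedding $W^{m,2}_{\mathcal N}(\Om)\hookrightarrow L^q(\Om)$ is compact for every $q<\infty$, so $v_n\to v_0$ in each $L^q$ and a.e., whence $\tilde f(x,v_n)\to \tilde f(x,v_0)$ and $\tilde F(x,v_n)\to\tilde F(x,v_0)$ a.e. by continuity.

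The single quantitative bound driving both limits comes from (iii): since $\langle J_0'(v_n),v_n\rangle=\|\nabla^m v_n\|_2^2-\int_\Om v_n\tilde f(x,v_n)\to 0$ and $\|\nabla^m v_n\|_2$ is bounded, the sequence $v_n\tilde f(x,v_n)$ is bounded in $L^1(\Om)$, say $\int_\Om v_n\tilde f(x,v_n)\le C$. I would prove the first limit by Vitali's theorem, for which only equi-integrability of $\{\tilde f(x,v_n)\}$ is left. Split at a level $R$: on $\{v_n\le R\}$ the function $\tilde f(x,v_n)$ is bounded by the fixed $L^1(\Om)$ function $f(R+u_0)-f(u_0)$ (finite since $u_0\in L^\infty$), while on $\{v_n>R\}$ one has $\tilde f(x,v_n)\le R^{-1}v_n\tilde f(x,v_n)$, so $\int_{\{v_n>R\}}\tilde f(x,v_n)\le C/R$ uniformly in $n$. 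Hence $\int_E\tilde f(x,v_n)$ is small once $|E|$ is small and $R$ is large, and Vitali gives $\int_\Om\tilde f(x,v_n)\to\int_\Om\tilde f(x,v_0)$ (indeed $L^1$ convergence).

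The delicate step, which I expect to be the main obstacle, is the second limit, where the weight $v_n$ is absent and concentration of the critical exponential is a genuine threat. The point is that super-quadraticity converts the $L^1$-bound on $v_n\tilde f(x,v_n)$ into equi-integrability of $\tilde F(x,v_n)$ with \emph{no} restriction on the level $c$: given $\varepsilon>0$, choose $\theta$ with $C/\theta<\varepsilon/2$ and the corresponding $R_\theta$, so that $\int_{\{v_n>R_\theta\}}\tilde F(x,v_n)\le\theta^{-1}\int_\Om v_n\tilde f(x,v_n)\le C/\theta<\varepsilon/2$, while on $\{v_n\le R_\theta\}$ one dominates $\tilde F(x,v_n)\le \tilde F(x,R_\theta)$, again a fixed $L^1(\Om)$ function. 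Equi-integrability of $\{\tilde F(x,v_n)\}$ follows and Vitali yields $\int_\Om\tilde F(x,v_n)\to\int_\Om\tilde F(x,v_0)$. In short, the factor $v_n$ tames the tail of $\tilde f$, and the arbitrarily large Ambrosetti--Rabinowitz exponent (a consequence of the $e^{s^2}$ growth) tames the tail of $\tilde F$, so that no subcritical energy threshold is required; the boundedness of $u_0$ is used only to make the truncated nonlinearities integrable.
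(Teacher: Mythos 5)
Your proposal is correct and follows essentially the same route as the paper's proof: boundedness of $\{v_n\}$ from the arbitrarily strong Ambrosetti--Rabinowitz inequality $\tilde{F}(x,s)\le \epsilon\, s\tilde{f}(x,s)$ for large $s$ (a consequence of the $e^{s^2}$ growth), the uniform $L^1$ bound on $v_n\tilde{f}(x,v_n)$ obtained by testing $J_0'(v_n)$ with $v_n$, equi-integrability via splitting at a large level, and Vitali's convergence theorem, with the distance condition (i) unused in both arguments. The only cosmetic difference is that for the equi-integrability of $\{\tilde{F}(x,v_n)\}$ the paper invokes the pointwise bound $|\tilde{F}(x,t)|\le C_1|\tilde{f}(x,t)|$ to inherit it from that of $\{\tilde{f}(x,v_n)\}$, whereas you reuse the Ambrosetti--Rabinowitz inequality directly; both rest on the same tail estimate, and both likewise use implicitly that $u_0$ is bounded on $\overline{\Omega}$.
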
 
\begin{proof}
From the fact that $\{v_n\}$ is a $(PS)_{H,c}$ sequence we have:
\begin{align}
\frac{1}{2}\int_{\Om} |\nabla^m v_n|^2 - \int_{\Om} \tilde{F}(x,v_n) &= c_0 + o_n(1),             \label{GPS1}\\
\left|\int_{\Om} \nabla^m v_n \cdot \nabla^m \phi - \int_{\Om}\tilde{f}(x,v_n) \phi \right| &\leq o_n(1) \|\phi\|_{W^{m,2}_{\mathcal{N}}(\Om)}, \hspace{5mm} \forall \phi \in W^{m,2}_{\mathcal{N}}(\Om).    \label{GPS2}
\end{align}
Now we claim that,\\
\textbf{Claim:} $\sup_n \|v_n\|_{W^{m,2}_{\mathcal{N}}(\Om)}  < \infty, \hspace{3mm}\sup_n \int_{\Om} \tilde{f}(x,v_n) < \infty.$\\
Given any $\epsilon >0$ there exists $s_{\epsilon} >0$ such that 
\begin{equation}
\int_{\Om} \tilde{F}(x,s) \leq \epsilon s \tilde{f}(x,s) \text{ for all } |s| \geq s_{\epsilon}.     \label{GPS3}
\end{equation}
Using (\ref{GPS1}) and (\ref{GPS3}), we see
\begin{align}
\frac{1}{2}\int_{\Om} |\nabla^m v_n|^2  &\leq \int_{\Om \cap \{|v_n| \leq s_{\epsilon}\}} \tilde{F}(x,v_n) + \int_{\Om \cap \{|v_n| \geq s_{\epsilon}\}} \tilde{F}(x,v_n) +c+o_n(1)            \notag \\
                 &\leq \int_{\Om \cap \{|v_n| \leq s_{\epsilon}\}}\tilde{F}(x,v_n) + \epsilon \int_{\Om} \tilde{f}(x,v_n)v_n +c+o_n(1)   \notag\\
                  &\leq C_{\epsilon} + \epsilon \int_{\Om} \tilde{f}(x,v_n)v_n.      \label{GPS4}
\end{align}
Now from (\ref{GPS4}) we obtain,
\begin{align*}
\int_{\Om}\tilde{f}(x,v_n)v_n &\leq \int_{\Om}|\nabla^m v_n|^2 + o_n(1) \|v_n\|_{W^{m,2}_{\mathcal{N}}(\Om)}\\
                             &\leq 2 C_{\epsilon} + 2\epsilon \int_{\Om} \tilde{f}(x,v_n)v_n +o_n(1)\|v_n\|_{W^{m,2}_{\mathcal{N}}(\Om)}
\end{align*}
by substituting $\phi = v_n$ in (\ref{GPS2}).\\
Hence by choosing $\epsilon$ small enough if needed we get 
\begin{equation}
 \int_{\Om} \tilde{f}(x,v_n)v_n \leq \frac{2C_{\epsilon}}{1-2\epsilon} + o_n(1)\|v_n\|_{W^{m,2}_{\mathcal{N}}(\Om)}.   \label{GPS5}
\end{equation}
We conclude the claim using (\ref{GPS5}), (\ref{GPS2}) and also $\sup_n\int_{\Omega}\tilde{f}(x,v_n)v_n <\infty$. \\
Since $\{v_n\} \subset W^{m,2}_{\mathcal{N}}(\Om)$ is bounded, up to a subsequence, $v_n \rightharpoonup v_0$ in $W^{m,2}_{\mathcal{N}}(\Om)$, for some $v_0 \in W^{m,2}_{\mathcal{N}}(\Om)$.\\
To prove (\ref{Vitali}) we consider $A$ to be a $2m$ dimensional Lebesgue measure of a set $A \subset \mathbb{R}^{2m}$.
\par Let $C=\sup_n \int_{\Omega}|\tilde{f}(x,v_n)v_n|<\infty$ from the above claim. Given $\epsilon >0$, we define 
\[\mu_{\epsilon}= \max_{x \in \bar{\Omega}, |s|\leq \frac{2C}{\epsilon}}|\tilde{f}(x,s)s|.\] Then, for any $A \subset \Omega$ with $|A| \leq \frac{\epsilon}{2C}$, we have
\begin{align*}
 \int_{A} |\tilde{f}(x,v_n)| &\leq \int_{A \cap \{|v_n| \geq \frac{2C}{\epsilon}\}} \frac{|\tilde{f}(x,v_n)v_n|}{|v_n|} + \int_{A \cap \{|v_n| \leq \frac{2C}{\epsilon}\}} |\tilde{f}(x,v_n)|\\
 &\leq \frac{\epsilon}{2} + \mu_{\epsilon}|A| \leq \epsilon.
\end{align*}
Hence $\{\tilde{f}(x,v_n)\}$ is an equi-integrable family in $L^1(\Omega)$ and so is $\{\tilde{F}(x,v_n)\}$(we note that $|\tilde{F}(x,t)| \leq C_1 |\tilde{f}(x,t)|$ for all $x \in \bar{\Omega}, t \in \mathbb{R}$, for some $C_1>0$). By applying the Vitali's convergence theorem we get conclude the lemma. 
 \end{proof}

Certainly $J_0(0)=0$ and $v=0$ is a local minimum for $J_0$. Also we have $$ \lim_{s \to \infty}J_0(sv) = -\infty \text{ for any } v \in W^{m,2}_{\mathcal{N}}(\Om)\setminus \{0\}.$$  Hence we can fix $e \in W^{m,2}_{\mathcal{N}}(\Om)\setminus \{0\}$ such that $J_0(e) <0$. Now we define the mountain pass level $$ c_0 = \inf_{\gamma \in \Gamma} \sup_{s \in [0,1]} J_0(\gamma(s)).$$
Where $\Gamma = \{ \gamma \in C([0,1],W^{m,2}_{\mathcal{N}}(\Om)): \gamma(0)=0, \gamma(1) = e\}$. Then from the definition of $c_0$ it follows $c_0 \geq 0$. Define $R_0 =\|e\|_{W^{m,2}_{\mathcal{N}}(\Om)},$ we note that $\inf\{J_0(v):\|v\|_{W^{m,2}_{\mathcal{N}}(\Om)}=R\} = 0$ for all $R \in (0,R_0)$. And we now let $H = W^{m,2}_{\mathcal{N}}(\Om)$ if $c_0 >0$ and $H=\{\|v\|_{W^{m,2}_{\mathcal{N}}(\Om)} =\frac{R_0}{2}\}$ if $c_0=0$. We now state as now the lemma giving upper bound for $c_0$
\begin{lem}The upper bound of the Mountain Pass level is below
\begin{equation}
c_0 < \frac{(4 \pi)^m m!}{2}= \frac{\beta_{2m,m}}{2}. \label{MPL1}
\end{equation}
\end{lem}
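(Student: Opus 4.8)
The plan is to bound the minimax value $c_0$ from above by testing $J_0$ along a single, optimally concentrating ray and to show that the maximum along that ray stays strictly below the compactness threshold $\beta_{2m,m}/2$. First I would record the arithmetic behind the equality in \eqref{MPL1}: since $n=2m$ we have $\frac{n}{n-m}=2$ and $w_{2m-1}=\frac{2\pi^m}{(m-1)!}$, so the two $\Gamma$-factors in the Adams constant of Theorem \ref{Adams_Thm} cancel for either parity of $m$ and
\[
\beta_{2m,m}=\frac{2m}{w_{2m-1}}\bigl(2^m\pi^m\bigr)^2=(4\pi)^m\,m!,
\]
which for $m=1$ returns the classical Moser--Trudinger value $4\pi$. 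Thus $\frac{(4\pi)^m m!}{2}=\frac{\beta_{2m,m}}{2}$, and it remains to prove $c_0<\beta_{2m,m}/2$. Because the element $e$ in the definition of $c_0$ is ours to choose, for any nonnegative $w\in W^{m,2}_{\mathcal{N}}(\Om)\setminus\{0\}$ I may take $e=Sw$ with $S$ so large that $J_0(Sw)<0$; the straight segment $t\mapsto tSw$ then lies in $\Gamma$, whence $c_0\le \sup_{s\ge 0}J_0(sw)$.

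Next I would discard the fixed minimum $u_0$ from the estimate. Since $f(t)=\mu|t|^p t\,e^{t^2}$ is convex, increasing and vanishes at $0$, it is superadditive on $[0,\infty)$, so $\tilde f(x,t)=f(t+u_0)-f(u_0)\ge f(t)$ for $t\ge 0$ and therefore $\tilde F(x,sw)\ge F(sw)$ whenever $w\ge 0$. Consequently
\[
\sup_{s\ge0}J_0(sw)\le \sup_{s\ge0}\Bigl(\tfrac{s^2}{2}\,\|\na^m w\|_2^2-\int_{\Om}F(sw)\Bigr),
\]
and it suffices to exhibit one nonnegative test function for which the right-hand side is strictly below $\beta_{2m,m}/2$.

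For $w=w_k$ I would take the normalized polyharmonic Adams--Moser concentrating family (the explicit sequences of Lu--Yang \cite{Lu-Yang} for $m=2$ and of Zhao--Chang \cite{Zhao-Chang} in general, already invoked after Theorem \ref{Adams_Thm}), scaled so that $\|\na^m w_k\|_2=1$ and supported in a fixed small ball. These have a two-scale profile that is essentially the constant $c_k$ with $c_k^2=\frac{2m}{\beta_{2m,m}}\log\frac{1}{r_k}$ on an inner ball of radius $r_k\to0$ and is logarithmic on the surrounding annulus. Using the asymptotics $F(t)\sim\frac{\mu}{2}|t|^p e^{t^2}$ as $t\to\infty$ on the inner ball, the term $\int_{\Om}F(sw_k)$ behaves to leading order like a multiple of $r_k^{\,2m(1-s^2/\beta_{2m,m})}$, which is negligible for $s^2<\beta_{2m,m}$ and blows up for $s^2>\beta_{2m,m}$. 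Optimizing $\frac{s^2}{2}-\int_{\Om}F(sw_k)$ in $s$ then forces the maximizer $s_k$ to satisfy $s_k^2=\beta_{2m,m}+o(1)$, so the leading value of the supremum is exactly $\beta_{2m,m}/2$.

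The hard part is precisely that these leading terms cancel against $\frac{s^2}{2}$ at the threshold, so the strict inequality lives entirely in the next-order constant terms of the expansion. Here I would retain the $O(1)$ corrections, namely the sharp normalization constants of the polyharmonic $w_k$ (the annulus contribution and the $1/\log(1/r_k)$ correction to $s_k^2$), together with the extra decay furnished by the polynomial prefactor $|t|^p$ when $p>0$ and, when $p=0$, the cross term $2s\,w_k u_0$ produced by the local minimum $u_0$ of Theorem \ref{1st solution}. A careful but routine bookkeeping should then give $\sup_{s\ge0}J_0(sw_k)\le \beta_{2m,m}/2-\delta_k$ with $\delta_k>0$ for all large $k$, and hence $c_0<\beta_{2m,m}/2$. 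Verifying the negativity of this first correction---that is, checking that the annulus term does not swallow the gain---is the delicate point, and is exactly where the sharpness of the Adams--Moser constants and the subcritical structure of $f$ must be used.
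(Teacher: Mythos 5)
Your setup matches the paper's: the same polyharmonic Adams--Moser concentrating family (after Lu--Yang \cite{Lu-Yang} and Zhao--Chang \cite{Zhao-Chang}), the reduction $c_0 \le \sup_{s\ge 0} J_0(sw)$ along a ray, the correct identity $\beta_{2m,m}=(4\pi)^m m!$, and a superadditivity reduction $\tilde F(x,t)\ge F(t)$ that does point in the right direction for an upper bound. But your proof stops exactly where the lemma's content begins: you defer the strict inequality $\sup_{s\ge0}\bigl(\tfrac{s^2}{2}-\int_\Om F(sw_k)\bigr)\le \tfrac{\beta_{2m,m}}{2}-\delta_k$ to ``careful but routine bookkeeping'' of next-order constant terms, and you yourself flag that checking the sign of that correction against the annulus contribution is the delicate point. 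That step \emph{is} the lemma; asserting it is a genuine gap. Worse, carrying it out along your route would require the sharp normalization and annulus asymptotics of the polyharmonic Adams functions in dimension $2m$, which is precisely the hard content of the cited constructions, and your diagnosis that ``the strict inequality lives entirely in the next-order constant terms'' misidentifies the mechanism for this nonlinearity.

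The paper avoids any fine expansion by arguing by contradiction at the stationarity identity, exploiting that $tf(t)e^{-t^2}=\mu|t|^{p+2}\to\infty$. Suppose $\sup_{s>0}J_0(s\tau_n)\ge\frac{(4\pi)^m m!}{2}$ for every $n$, attained at $s_n$. Since $\tilde F\ge 0$, this forces $s_n^2\ge(4\pi)^m m!=2mM$ with $M=\frac{(4\pi)^m(m-1)!}{2}$. On the other hand, $\frac{d}{ds}J_0(s\tau_n)=0$ at $s_n$ gives $s_n^2=\int_\Om \tilde f(x,s_n\tau_n)\,s_n\tau_n$, and restricting to the concentration ball $\{|x|\le n^{-1/2}\}$, where $\tilde f(x,t)\ge e^{t^2}$ for large $t$ and $\tau_n\approx\sqrt{\log n/(2M)}$, yields
\[
s_n^2 \;\ge\; \frac{\alpha_{2m}}{\sqrt{2M}}\, e^{\left(\frac{s_n^2}{2M}-m\right)\log n}\, s_n(\log n)^{1/2}.
\]
The first bound makes the exponential factor $\ge 1$, so $s_n\gtrsim(\log n)^{1/2}\to\infty$; but the same inequality forces $s_n$ to stay bounded (otherwise the exponential term outgrows $s_n^2$), a contradiction. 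No constant-level bookkeeping is needed: the divergent prefactor $s_n\tau_n\sim\sqrt{\log n}$ --- i.e.\ the extra power in $tf(t)e^{-t^2}$ --- does all the work exactly at the threshold. Your reduction to the pure functional $\tfrac{s^2}{2}-\int_\Om F(sw_k)$ is compatible with this argument (the same divergence holds for $f$ itself, for both $p>0$ and $p=0$), so the repair is to replace your unexecuted expansion with this contradiction scheme rather than to push the expansion through.
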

\begin{proof}
Without loss of generality we can assume that the unit ball $B_0(1) \subset \Om$. For any $\epsilon >0$ we define  
\begin{align}
{\tilde{\tau}}_n(x) := \left\{
\begin{array}{lllll}
\sqrt{\frac{1}{2M}\log n}+\frac{1}{\sqrt{2M \log n}} \sum_{\gamma =1}^{m-1} \frac{(1-k|x|^2)^{\gamma}}{\gamma} \hspace{6mm} |x| \in [0, \frac{1}{\sqrt{n}})\\
-\sqrt{\frac{2}{M \log n}} \log |x|,                \hspace{3.75cm} |x| \in [\frac{1}{\sqrt{n}},1),\\
\chi_n(x),       \hspace{5.59cm} |x| \in [1,\infty)
\end{array}\right.
\end{align}
where \[ M = \frac{(4 \pi)^m (m-1)!}{2}, \hspace{1cm} \chi_n \in \mathcal{C}^{\infty}_0(\Om), \quad \chi_n|_{\partial B_1(0)}= \chi_n|_{\partial \Omega} =0.\]
Furthermore, for $\gamma=1,2,..,m-1, D^{\gamma} \chi_n|_{\partial B_1(0)}= (-1)^{\gamma}(\gamma-1)! \sqrt{\frac{2}{M \log n}}$,  $\Delta^j \chi_n|_{\partial \Omega} = 0$ for $j =0,1,2,...,[(m-1)/2]$ 
and $ \chi_n, |\nabla \chi_n|, \De \chi_n$ are all $O\left(\frac{1}{\sqrt{2 \log n}}\right)$. Then, $\tilde{\tau}_n \in W^{m,2}_{\mathcal{N}}(\Om).$
Now we normalize $\tilde{\tau}_n$, setting $$ {\tau}_n := \frac{\tilde{\tau}_n}{\|\tilde{\tau}_n\|_{W^{m,2}_{\mathcal{N}}(\Om)}} \in  W^{m,2}_{\mathcal{N}}(\Om).$$
Suppose (\ref{MPL1}) is not true. This means that, for some $s_n>0$ (see \cite{Lam-Lu}),
\[J_0(s_n \tau_n)= \sup_{s>0}J_0(s \tau_n) \geq \frac{(4 \pi)^mm!}{2}  \hspace{1cm} \forall n.\]
Hence 
\begin{equation}
 \frac{s_n^2}{2} - \int_{\Om} \tilde{F}(x,s_n \tau_n) \frac{(4 \pi)^m m!}{2} \hspace{1cm} \forall n.     \label{MPL2}
\end{equation}
 It follows that $\frac{d}{ds}J_0(s \tau_n)=0$ at the point of maximum $s=s_n$ for $J_0$, we get 
\begin{equation}
s_n^2 = \int_{\Om} \tilde{f}(x,s_n \tau_n)(s_n \tau_n).     \label{MPL3}
\end{equation} 

 Now we note that from the definition of $\tilde{f}$ we see that $\inf_{x \in \bar{\Om}} \tilde{f}(x,s) \geq e^{s^2}$ for $|s|$ large. Then from (\ref{MPL2}) we get for sufficiently large $n$  
 \begin{align}
 s_n^2 &\geq \int_{\{|x| \leq \frac{1}{\sqrt{n}}\}} \tilde{f}(x,s_n \tau_n)(s_n \tau_n) \geq \int_{\{|x| \leq \frac{1}{\sqrt{n}}\}} e^{s_n^2 \tau_n ^2}(s_n \tau_n)        \notag\\
        &\geq e^{{s_n^2} \frac{\log n}{2M}} \frac{s_n}{\sqrt{2M}} \sqrt{\log n} \frac{\alpha_{2m}}{n^m}   \notag\\
        &=\frac{\alpha_{2m}}{\sqrt{2M}} e^{\left(\frac{s_n^2}{2M}-m\right)\log n} s_n (\log n)^{\frac{1}{2}},    \label{MPL4}
 \end{align}
  where $\alpha_{2m}$ is the volume of the unit ball in $\mathbb{R}^{2m}$. Using the fact $s_n^2 \geq (4 \pi)^m m!$ from (\ref{MPL2}) and (\ref{MPL4}) it follows that $s_n$ is bounded and also $s_n^2 \to (4 \pi)^m m!$. Also from (\ref{MPL4}) we note \[ s_n \geq \frac{w_{2m}}{\sqrt{2M}} (\log n)^{\frac{1}{2}}, \text{ for all large } n\] which gives the contradiction. 
\end{proof}
We now prove the theorem regarding the existence of second solution. 
\begin{theorem}  \label{2nd solution}
Given a local minimum $u_0$ of $J$ in $W^{m,2}_{\mathcal{N}}(\Om)$, there exists a point $v_0 \in W^{m,2}_{\mathcal{N}}(\Om)$ with $v_0 >0$ in $\Om$, such that $J_0'(v_0)=0$. 
\end{theorem}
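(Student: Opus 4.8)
The plan is to realize $v_0$ as the weak limit of a Palais--Smale sequence supplied by the generalized mountain pass lemma, and then to promote it to a genuine positive solution.

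\textbf{Step 1 (a localized Palais--Smale sequence).} First I would invoke the Ghoussoub--Preiss lemma \cite{Preiss} for $J_0$ with the mountain pass geometry already recorded: $J_0(0)=0$ with $0$ a local minimum, $J_0(e)<0$, and $\inf_{\|v\|=R}J_0=0$ for $R\in(0,R_0)$. When $c_0>0$ one takes $H=W^{m,2}_{\mathcal N}(\Om)$ and the lemma reduces to the classical construction of a Palais--Smale sequence at level $c_0$; when $c_0=0$ the sphere $H=\{\|v\|=R_0/2\}$ separates $0$ from $e$ and satisfies $\inf_H J_0=0=c_0$, so the separation hypothesis of \cite{Preiss} is met. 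Either way I obtain a $(PS)_{H,c_0}$ sequence $\{v_n\}$; in particular (\ref{GPS1})--(\ref{GPS2}) hold and $\mathrm{dist}(v_n,H)\to0$.

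\textbf{Step 2 (passage to the limit).} By Lemma \ref{mountain} the sequence is bounded and, up to a subsequence, $v_n\rightharpoonup v_0$ in $W^{m,2}_{\mathcal N}(\Om)$ with $\int_\Om\tilde f(x,v_n)\to\int_\Om\tilde f(x,v_0)$ and $\int_\Om\tilde F(x,v_n)\to\int_\Om\tilde F(x,v_0)$; the equi-integrability underlying that lemma also gives $\int_\Om\tilde f(x,v_n)\phi\to\int_\Om\tilde f(x,v_0)\phi$ for every $\phi\in W^{m,2}_{\mathcal N}(\Om)$. Letting $n\to\infty$ in (\ref{GPS2}) and using $v_n\rightharpoonup v_0$ in the linear term yields $\int_\Om\nabla^m v_0\cdot\nabla^m\phi=\int_\Om\tilde f(x,v_0)\phi$ for all $\phi$, that is $J_0'(v_0)=0$.

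\textbf{Step 3 (nontriviality).} The crux is $v_0\neq0$, and I would split according to $c_0$. If $c_0=0$, then $\mathrm{dist}(v_n,H)\to0$ forces $\|v_n\|\to R_0/2>0$; were $v_0=0$, Lemma \ref{mountain} would give $\int_\Om\tilde F(x,v_n)\to0$, whence (\ref{GPS1}) yields $\|\nabla^m v_n\|_2\to0$ and, by the norm equivalence, $\|v_n\|\to0$, a contradiction. If $c_0>0$, suppose $v_0=0$; then $\int_\Om\tilde F(x,v_n)\to0$ and (\ref{GPS1}) give $\|\nabla^m v_n\|_2^2\to 2c_0$, which by the bound (\ref{MPL1}) stays strictly below $\beta_{2m,m}$. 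Choosing $q>1$ with $2qc_0<\beta_{2m,m}$, Theorem \ref{Tarsi_Thm} keeps $\{\tilde f(x,v_n)\}$ bounded in $L^{q}(\Om)$ for large $n$, while $v_n\to0$ strongly in every $L^r(\Om)$; Hölder then gives $\int_\Om\tilde f(x,v_n)v_n\to0$. Testing (\ref{GPS2}) with $\phi=v_n$ now forces $\|\nabla^m v_n\|_2^2\to0$, contradicting $\|\nabla^m v_n\|_2^2\to2c_0>0$. Hence $v_0\neq0$ in both cases.

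\textbf{Step 4 (positivity).} Since $\tilde f(x,s)\ge0$ for all $s$ and $v_0$ solves $(-\Delta)^m v_0=\tilde f(x,v_0)$ with $\Delta^j v_0=0$ on $\partial\Om$, I would use the positivity preserving property of the polyharmonic operator under Navier data: factoring $(-\Delta)^m$ into $m$ successive problems $-\Delta(\cdot)=(\text{previous iterate})$, each carrying homogeneous Dirichlet data inherited from the Navier conditions, and applying the second order maximum principle at every step propagates $\tilde f(x,v_0)\ge0$ down to $v_0\ge0$. As $v_0\not\equiv0$, the strong maximum principle then upgrades this to $v_0>0$ in $\Om$, producing the desired positive critical point of $J_0$ (and hence the second solution $u_0+v_0$ of $(P)$). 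The main obstacle is Step 3 in the case $c_0>0$: excluding vanishing of the exponential nonlinearity. It is exactly the strict inequality $c_0<\beta_{2m,m}/2$ from (\ref{MPL1}) that keeps $\|\nabla^m v_n\|_2^2$ below the Adams--Tarsi threshold and thereby restores the higher integrability of $\tilde f(x,v_n)$ required for compactness.
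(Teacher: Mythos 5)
Your proposal is correct and follows essentially the same route as the paper's proof: a Ghoussoub--Preiss $(PS)_{H,c_0}$ sequence with the same choice of $H$, Lemma \ref{mountain} to pass to the weak limit, the level estimate (\ref{MPL1}) combined with the Adams--Tarsi inequality (Theorem \ref{Tarsi_Thm}) to rule out $v_0=0$ when $c_0>0$, the sphere $H=\{\|v\|=R_0/2\}$ to rule it out when $c_0=0$, and the maximum principle for positivity. Your only departures are cosmetic refinements (H\"older in place of Vitali for $\int_\Om \tilde f(x,v_n)v_n\to 0$, a cleaner contradiction in the case $c_0=0$, and an explicit iteration of second-order maximum principles), which do not change the argument.
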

\begin{proof}
From Lemma \ref{MPL1} we have $c_0 \in \left[0, \frac{(4 \pi)^ m m!}{2}\right).$ Consider $\{v_n\}$ be a Palais-Smale sequence for $J_0$ at the level $c_0$ around $H$ (such a $(PS)_{H,c_0}$ sequence exists \cite{Preiss}). Then up to a subsequence $v_n \rightharpoonup v_0$ in $W^{m,2}_{\mathcal{N}}(\Om)$ for some $v_0 \in W^{m,2}_{\mathcal{N}}(\Om)$ by Lemma (\ref{mountain}) and (\ref{Vitali}) holds. We can easily check that $v_0$ is a solution of $(\tilde{P})$ and therefore a critical point of $J_0$. It remains to show that $v_0$ is not a trivial solution. \\
\textbf{Case I.} $c_0 = 0,  v_0 = 0$. We note that $H= \{\|v\|_{W^{m,2}_{\mathcal{N}}(\Om)} = \frac{R_0}{2}\}$ in this case. As $\{v_n\}$ is a $(PS)_{H,c}$ sequence we have $v_n \to 0$ strongly in $W^{m,2}_{\mathcal{N}}(\Om)$. From the fact that $dist(v_n,H) = 0$ and $H$ is closed we conclude that $v_n \in H$ and which implies that $v_0 \in H$ and $v_0$ is different from $0$.\\
\textbf{Case II.} $c_0 \in \left(0,\frac{(4 \pi)^m m!}{2}\right), v_0 =0$. Using the fact that $J_0(v_n) \to c_0$ we see that for given any $\epsilon > 0, \|v_n\|^2_{W^{m,2}_{\mathcal{N}}(\Om)} \leq (4 \pi)^m m! -\epsilon$ for all large $n$.  Let $0 < \delta < \frac{\epsilon}{(4 \pi)^m m!}$ and $q = \frac{(4 \pi)^m m!}{(1+\delta)((4 \pi)^m m!-\epsilon)} >1$. We have  
$$\int_{\Om} |\tilde{f}(x,v_n)v_n|^q \leq C \int_{\Om} e^{((1+\delta)q\|v_n\|^2)\left(\frac{v_n^2}{\|v_n\|^2}\right)^2},$$
since $\sup_{x \in \bar{\Om}}|\tilde{f}(x,s)s| \leq C e^{(1+\delta)s^2}$, for all $ s \in \mathbb{R}$, for some $C>0$. Now from the Tarsi's embedding \ref{Tarsi} we get that $\sup_{x \in \bar{\Om}} \int_{\Om} |\tilde{f}(x,v_n)v_n|^q  < \infty$ since $(1+\delta)q\|v_n\|^2 \leq (4 \pi)^m m!$. Also by Vitali's convergence theorem we get  $\int_{\Om} \tilde{f}(x,v_n)v_n \to 0$ as $n \to \infty$ since $v_n \to 0$ pointwise almost everywhere in $\Om$. Which implies 
\begin{align*}
 o_n(1) \|v_n\|_{W^{m,2}_{\mathcal{N}}(\Om)} &= \langle J_0'(v_n),v_n \rangle = \frac{1}{2} \int_{\Om}|\nabla v_n|^2 - \int_{Om} \tilde{f}(x,v_n)v_n\\
 &= \frac{1}{2} \int_{\Om} |\nabla^m v_n|^2+ o_n(1)
 \end{align*}
which contradicts the fact $\frac{1}{2} \int_{\Om} |\nabla^m v_n|^2 \to c_0$ as $n \to \infty$. Therefore $v_0$ is not identically $ 0$ in $\Om$. And positivity of $v_0$ comes from the fact that $\tilde{f}(x,s) \geq 0$ for all $(x,s) \in \Om \times \mathbb{R}$ and using the maximum principle.
\end{proof}
\section{Proof of Theorem \ref{Multiplicity}}
Define $\lambda_* = \mu C_0^{\frac{p+3}{p+4}}|\Om|^{-\frac{p+2}{2p+8}}$ where $C_0$ is same as in the Proposition (\ref{lambdastar}). Then condition (\ref{Nehari1}) is true whenever $0<\lambda < \lambda_*$. From the Theorem \ref{1st solution} and \ref{2nd solution} we show the existence of at least two positive solutions for $(P)$. \\
Let $\phi_1$ be the eigen function of $(-\Delta)^m$ on $W^{m,2}_{\mathcal{N}}(\Om)$. Define
$$\lambda^* = p \left( \frac{\lambda_1}{p+1}\right)^{\frac{p+1}{p}} \left( \frac{\int_{\Om}\phi_1}{\int_{\Om}h \phi_1}\right).$$
 We prove that there is no solution of $(P)$ when $\lambda > \lambda^*$. Assume that $u_{\lambda}$ be a solution of $(P)$. By multiplying $\phi_1$ with $(P)$ and performing integration by parts over $\Omega$, we get
\begin{align*}
\int_{\Om} \D u_{\lambda} \phi_1 = \int_{\Om}f(u_{\lambda})\phi_1 + \lambda \int_{\Om}h \phi_1
\end{align*}
implies 
\begin{equation}
\lambda \int_{\Om} h \phi_1 = \int_{\Om}(\lambda_1u_{\lambda}-f(u_{\lambda}))\phi_1                 \label{lambdaupper}
\end{equation}
We see that $\lambda_1t-f(t) \leq \lambda_1-\mu t^{p+1}=\Theta(t)$ for all $t >0$. The global maximum for the function $\Theta$ is $p \left(\frac{\lambda_1}{p+1} \right)^{\frac{p+1}{p}}$ on $(0,\infty)$. 
Then from (\ref{lambdaupper}) and the definition of $\lambda^*$ we get $\lambda \leq \lambda^*$. This completes Theorem \ref{Multiplicity}.
\section*{Acknowledgement} The author would like to thank Prof. S Prashanth for several helpful discussions.


\begin{thebibliography}{90}
\bibitem{Adams} D. R. Adams, A sharp inequality of J. Moser for higher order derivatives. Ann. Math. 128 (1988), 385-398.

\bibitem{Peral} F. Bernis, J. Garcia-Azorero and I. Peral, Existence and multiplicity of nontrivial solutions in semilinear critical problems of fourth order, Advances in Differential Equations, Volume 1, 2 (1996), 219-240.

\bibitem{Preiss}N. Ghoussoub and D. Preiss, A general mountain pass principle for locating and classifying critical points, Ann. Inst. H. Poincare- Anal. non lineaire, 6 (1989), 819-851.

\bibitem{Lam-Lu}Nguyen Lam and Guozhen Lu, Existence of nontrivial solutions to polyharmonic equations with sub-critical and critical exponential growth, Discrete and Continuous Dynamical Systems, Volume 32, 6 (2012), 2187-2205.

\bibitem{Lu} Dengfeng Lu, Existence and multiplicity results for critical growth polyharmonic elliptic systems, Mathematical Methods in the Applied Sciences, 37 (2014), 581-596.

\bibitem{Lu-Yang} Guozhen Lu and Yunyan Yang, Adams' inequality for bi-laplacian and extremal functions in dimension four, Advances in Mathematics, 220 (2009), 1135-1170. 

\bibitem{Moser} J. Moser, A sharp form of an inequality by N. Trudinger, Indiana Univ. Math. Jour., 20 (1971), 1077-1092.

\bibitem{Prashanth}S. Prashanth and K. Sreendah, Multiplicity of of solutions to a nonhomogeneous elliptic equation in $\mathbb{R}^2$, Differential and Integral equations, Volume 18, 6 (2005), 681-698.

\bibitem{Tarantello} G. Tarantello, On nonhomogeneous elliptic equations involving critical Sobolev exponent, Ann. Inst. H. Poincare- Anal. non lineaire, 9 (1992), 281-304. 

\bibitem{Tarsi} Cristina Tarsi, Adams' inequality and limiting Sobolev embeddings into Zygmund spaces, Potential Analysis, 37 (2012), 353-385.

\bibitem{Zhang} Yajing Zhang, Positive solutions of semilinear biharmonic equations with critical
Sobolev exponents, Nonlinear Analysis 75 (2012) 55–67.

\bibitem{Zhao-Chang} Liang Zhao and Yuanyuan Chang, Min–max level estimate for a singular quasilinear
polyharmonic equation in $\mathbb{R}^{2m}$, J. Differential Equations 254 (2013), 2434-2464.
\end{thebibliography}
\end{document}